\newcolumntype{a}{>{\columncolor{red}}c}
\newcommand*{\@rowstyle}{}
\newcommand*{\rowstyle}[1]{
  \gdef\@rowstyle{#1}%
  \@rowstyle\ignorespaces%
}
\newcolumntype{=}{
  >{\gdef\@rowstyle{}}%
}
\newcolumntype{+}{
  >{\@rowstyle}%
}
\begin{document}

\title{Strong convergence of a linearization method for semi-linear elliptic equations with variable scaled production \thanks{The first author was funded by US Army Research Laboratory and US Army Research Office grant W911NF-19-1-0044. The work of the first author was also 
supported by a postdoctoral fellowship of the Research Foundation-Flanders (FWO) in Belgium.}
}


\author{Vo Anh Khoa   \and
      Ekeoma Rowland Ijioma  \and
      Nguyen Nhu Ngoc
}


\institute{Vo Anh Khoa \at
              Department of Mathematics and Statistics, University of North Carolina at Charlotte, Charlotte, North Carolina 28223, USA      \\
			Faculty of Sciences, Hasselt University, Campus Diepenbeek, Agoralaan
			Building D, BE3590 Diepenbeek, Belgium             
           \\
           \email{vakhoa.hcmus@gmail.com, anhkhoa.vo@uncc.edu}
           \and
           Ekeoma Rowland Ijioma \at 
           Meiji Institute for Advanced Study of Mathematical Sciences, 4-21-1 Nakano, Nakano-ku, Tokyo, Japan \\
              \email{e.r.ijioma@gmail.com}
              \and
              Nguyen Nhu Ngoc \at
              Dipartimento di Matematica, Politecnico di Milano, Milano 20133, Italy\\
              \email{nhungoc.nguyen@polimi.it}   
}

\date{Received: date / Accepted: date}

\maketitle

\begin{abstract}
This work is devoted to the development and analysis of a linearization algorithm for microscopic elliptic equations, with scaled degenerate production, posed in a perforated medium and constrained by the homogeneous Neumann-Dirichlet boundary conditions. This technique plays two roles: to guarantee the unique weak solvability of the microscopic problem and to provide a fine approximation in the macroscopic setting. The scheme systematically relies on the choice of a stabilization parameter in such a way as to guarantee the strong convergence in $H^1$ norm for both the microscopic and macroscopic problems. In the standard variational setting, we prove the $H^1$-type contraction at the micro-scale based on the energy method. Meanwhile, we adopt the classical homogenization result in line with corrector estimate to show the convergence of the scheme at the macro-scale. In the numerical section, we use the standard finite element method to assess the efficiency and convergence of our proposed algorithm.
\keywords{Microscopic problems \and Linearization \and Well-posedness \and Homogenization \and Error estimates \and Perforated domains}
\end{abstract}

\section{Introduction}
\label{intro}
\subsection{Microscopic problem}
Let $ \Omega^{\varepsilon}$ be a Lipschitz perforated domain contained in a polygonal bounded domain $ \Omega \subset \mathbb{R}^d $ $(d=2, 3)$. In this sense, $ \Omega^{\varepsilon}$ possesses a
uniformly periodic microstructure defined by a length scale $\varepsilon$. This  $ \varepsilon $ is a small parameter $ (0<\varepsilon \ll 1) $ since the size of the pores are usually much smaller than the characteristic length of the reservoir. We are herein concerned with the asymptotic behavior in a stationary case of the function $u_{\varepsilon}: \Omega^{\varepsilon} \to \mathbb{R}$ that describes the spread of concentration of solutes dissolved in a saturated porous tissue shaped by the perforated domain $ \Omega^{\varepsilon}$ with a cubic periodicity
cell $Y = [0,1]^d$. The molecular diffusion coefficient $ \textbf{A}:Y\to \mathbb{R}^{d\times d}$ is assumed to vary in the cell $ Y $, while we consider in this scenario the presence of a volume reaction $\mathcal{R}: \mathbb{R}\to \mathbb{R} $ subject to an internal source $ f : \Omega \to \mathbb{R}$. We also take into account the no-flux boundary condition at the internal boundaries, denoted by $ \Gamma^{\varepsilon} $, whilst giving the homogeneous Dirichlet boundary condition at the exterior boundary, denoted by $\Gamma^{\text{ext}}$. Essentially, this context can be understood by the following elliptic problem:
\begin{equation}\label{Eq(P)}
(P_{\varepsilon}):  \begin{cases}
L_{\varepsilon}u_{\varepsilon}+ \varepsilon^{\alpha}\mathcal{R}(u_{\varepsilon}) = f(x) \; \qquad \text{in}\;\Omega^{\varepsilon}, \alpha \ge 0, \\
- \textbf{A}(x/\varepsilon) \nabla u_{\varepsilon} \cdot n = 0 \;\; \quad\qquad \;\; \text{across}\; \Gamma^{\varepsilon},\\
u_{\varepsilon}=0 \qquad \qquad\qquad\qquad\quad\;\; \text{across} \; \Gamma^{\text{ext}}, \\
\end{cases}
\end{equation}
where $ L_{\varepsilon} $ is a symmetric operator given by
\begin{align}
L_{\varepsilon} u=\nabla\cdot \left(-\textbf{A}\left(\frac{x}{\varepsilon}\right)\nabla u\right) = \sum_{i,j=1}^{d} \frac{\partial}{\partial x_i} \left(-a_{ij}\left(\frac{x}{\varepsilon}\right) \frac{\partial u}{\partial x_j}\right).
\end{align}

\subsection{Background and motivation} 

In this paper, we follow up on our earlier works \cite{Khoa2016,Khoa2017,khoa2018pore} that focus on the asymptotic analysis of semi-linear elliptic problems posed in perforated domains. This well-understood elliptic problem was applied in the studies of the heat transfer in composite materials and of the pressure and phase velocities in porous media flow. Even though those applications are involved in the microscopic system \eqref{Eq(P)}, our wish in this paper is to prepare and develop the playground of the model for diffusion, aggregation and surface deposition of a concentration in a porous medium; cf. e.g. \cite{Krehel2015,Timofte2013}. The presence of non-negative scalings stems from our mathematical concerns about the \emph{non-trivial} asymptotic behaviors of $u_{\varepsilon}$ when $\varepsilon$ tends to 0 and their corresponding rates. In fact, our most recent result in \cite{khoa2018pore} has shown that when $\alpha< 0$ the macroscopic solution is identically zero after the homogenization process ($\varepsilon\to 0$).

The study of the variable scaled nonlinearities soon appeared in the works \cite{Cabarrubias2012,C.Conca2004}, where they considered the homogenization of elliptic problems with a scaled Robin boundary condition. As a concrete motivation of our proposed model \eqref{Eq(P)}, the authors in \cite{C.Conca2004} delved into the chemical reactive flows through the exterior of a domain with distributed reactive obstacles, where the fractional (Langmuir kinetics) and polynomial (Freundlich kinetics) surface reactions were taken into account. Mathematically, such surface reactions posed on $\Gamma^{\varepsilon}$ read as $- \textbf{A}(x/\varepsilon) \nabla u_{\varepsilon} \cdot n = \varepsilon^{\beta}\mathcal{S}(u_{\varepsilon})$ for $\beta\ge 1$, just like what has been studied in \cite{khoa2018pore}. Note that even though we only consider in \eqref{Eq(P)} the zero Neumann case of the internal boundary $\Gamma^{\varepsilon}$, it is completely similar to adapt our analysis below to the surface reaction (using the same assumption as that of the volume one). Hence, our mathematical analysis in this work is pertinent and applicable.

Arguments obtained from studies of the variable scalings can be helpful in the qualitative analysis of eigen-elements for elliptic boundary value problems with rapidly oscillating coefficients in a perforated cube (cf. e.g. \cite{Douanla2012,Gryshchuk2013}), while it also concerns low-cost control problems in \cite{Muthukumar2009}. On top of that, they can be further adapted to complex scenarios that include, for example, the non-stationary Stokes--Nernst--Planck--Poisson system in the dynamics of colloids (cf. e.g. \cite{Ray2012} and references cited therein).

Cf. \cite{khoa2018pore}, the microscopic solution to the problem $(P_{\varepsilon})$ converges to a macroscopic function that solves a certain non-trivial homogenized problem since the scaling variable $\alpha\ge 0$ is eventually the main factor that determines the presence of the reaction term $\mathcal{R}$ at the macro-scale. In principle, the main purpose of homogenization is to unveil the macroscopic shape of $u_{\varepsilon}$ as it is less time-consuming than any approximation of $u_{\varepsilon}$ itself. Indeed, as is known in the homogenization community, solving $(P_{\varepsilon})$ directly is computationally expensive since the space discretization is inversely proportional to the scale parameter $\varepsilon$. Before the homogenization, one essentially needs the existence and uniqueness of the microscopic solution. Therefore, it ended up with our pursuit to design a linearization scheme that plays these two roles: simultaneously obtaining weak sovability of the microscopic system (for all $\varepsilon>0$) and the linearized homogenized system.

Cf. \cite{Khoa2016} as our initiation, a linearization scheme was briefly designed to prove the weak solvability of $(P_{\varepsilon})$ as $\alpha = 0$. However, this result was only guaranteed when the diffusion must be very larger than the Lipschitz rate of reactions. Our next evolution in this area went to the work \cite{khoa2018pore}, where, for the first time, we addressed a linearization scheme for the weak solvability of a semi-linear microscopic system with real variable scalings. Since in those works, our interests were in the asymptotic analysis of the microscopic solution, we did not get through the computational standpoint of the homogenized system. Thereupon, this is the moment to delineate a reliable approximation of $u_{\varepsilon}$ at the macro-scale, which is our second purpose of the aforementioned wish.

\subsection{Goals}
In this work, we show that the linearization scheme we design is essential in proving the well-posedness of $(P_{\varepsilon})$, but also in deriving the approximate macroscopic solution with certain error estimates. Basically, our theoretical analysis will proceed in accordance with the following diagram:
\begin{align}\label{d1}
\begin{array}{ccccccc}
\left(P_{\varepsilon}\right) & \longrightarrow & \left(P_{\varepsilon}^{k}\right) & \longrightarrow & \left(P_{0}^{k}\right) \\
\downarrow &  & \downarrow &  & \downarrow & \\
u_{\varepsilon} & \longrightarrow & u_{\varepsilon}^{k} & \longrightarrow & u_{0}^{k} 
\end{array}
\end{align}
Here, $(P_{\varepsilon}^{k})$ denotes the approximate problem of $(P_{\varepsilon})$ by linearization, whilst its macroscopic equation is structured in $(P_{0}^{k})$. The notion behind this approach is to linearize nonlinearities in the model using a suitable choice of the stabilization parameter. As the nonlinearity is supposed to be degenerate at a single point, we are aided by a regularization approach  during the linearization process. In this way, we arrive at a regularized form of the nonlinearity, where we can figure out the error estimate between $u_{\varepsilon}$ and $u_{\varepsilon}^{k}$ in $L^2$-norm. In our proof, the stabilization is $\varepsilon$-dependent only when the scaling factor $\alpha$ is positive. Meanwhile, $\varepsilon$ does not contribute to the convergence of the linearization scheme for any $\alpha \ge 0$. As $\varepsilon\to 0$ in the homogenization process, the stabilization constant becomes $\varepsilon$-independent for any $\alpha \ge 0$. Henceforth, several stability estimates for the macroscopic solution are easily obtained.

As another vantage of our proposed scheme, we point out that if the solvability of $(P_{\varepsilon})$ and the corresponding macroscopic equation are already known, one can also use the scheme directly to get the approximation $u_{0}^{k}$. In this case, we mean
\begin{align}\label{d2}
\begin{array}{ccccccc}
\left(P_{\varepsilon}\right) & \longrightarrow & \left(P_{0}\right) & \longrightarrow & \left(P_{0}^{k}\right) \\
\downarrow &  & \downarrow &  & \downarrow & \\
u_{\varepsilon} & \longrightarrow & u_{0} & \longrightarrow & u_{0}^{k} 
\end{array}
\end{align}

Furthermore, in the diagram \eqref{d2} we can prove that the rate of convergence is $k$-independent (cf. \cite{Slodika2001}), compared to the result we obtain in  Corollary \ref{cor:error2} for the diagram \eqref{d1}. It is worth mentioning that as we aim to show the unique weak solvability of the microscopic problem by a linearization technique, we follow diagram \eqref{d1}.

\subsection{Outline of the paper}
The paper is organized as follows. Section \ref{Sec:2} is dedicated to introducing notations and necessary assumptions on the input of the problem. In Section \ref{Sec:3}, we propose an iterations-based variational scheme to linearize the microscopic problem $\left(P_{\varepsilon}\right)$. Accordingly, we obtain the well-posedness of $\left(P_{\varepsilon}\right)$ as well as the rate of convergence by the linearization we choose. Settings of the homogenization are involved in Section \ref{Sec:4}, where we also state the structures of the cell problems and the limit equation at every step of linearization. Additionally, several types of stability analysis of the scheme at the macro-scale are justified. Section \ref{Sec:numerical} is devoted to the numerical test  of the scheme and we close this paper with some concluding remarks in Section \ref{Sec:conclusion}.

\section{Preliminaries}\label{Sec:2}
In the sequel, all the constants $C$ are  independent of the scaling parameter $\varepsilon$,  but their
precise values may differ from line to line and may change even within a single chain
of estimates. We use either the superscript or subscript $\varepsilon$ to indicate its dependence.  Depending on the situation, we denote by $\left|\cdot\right|$ the absolute value of a function, the finite-dimensional Euclidean norm of a vector or the
volume of a domain.

The perforated domain $\Omega^{\varepsilon}$ is supposed to be bounded, connected and possesses a $C^{0,1}$-boundary. It is thought to approximate a porous medium and its precise description can be found in \cite{Khoa2015}. For brevity, we herein skip the mathematical descriptions of the perforated domain of interest. Instead, we only provide Figure \ref{fig:1} an admissible geometry of our medium and the corresponding microstructure when looking for a graphically schematic representation of the scaling procedure within a natural soil.  In addition, the reader can be referred to \cite{Hornung1991,Fatima2014,Cancedda2016} and those collectively mentioned in Section \ref{intro} for some concrete results concerning such domains.

\begin{figure}[htbp]
	\centering
	\includegraphics[scale = 0.45]{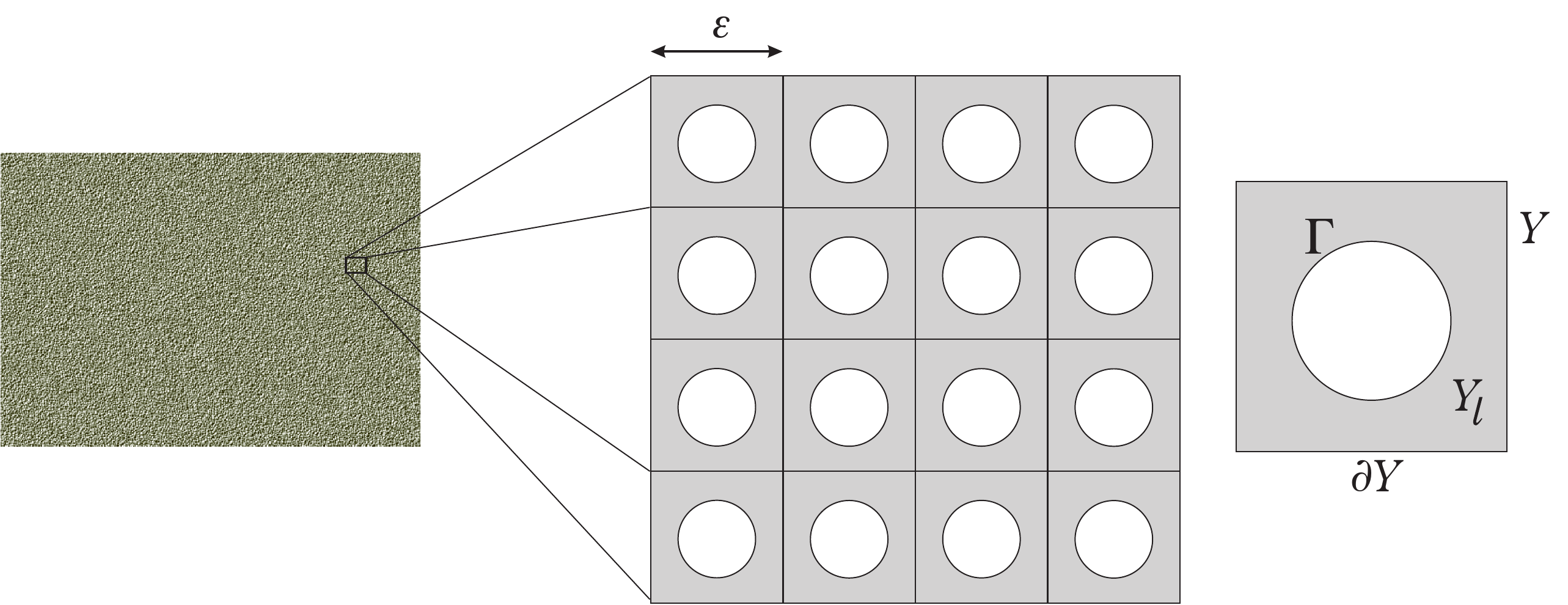}
	\caption{A schematic representation of the scaling procedure.}
	\label{fig:1}
\end{figure}

\begin{definition}{(Degenerate class)}\label{def:degenerate}
	A real-valued function $F$ is said to be degenerate at a point $x_{0}\in \mathbb{R}$ if we can find $\delta_{1}>0$ such that $0\le F'(x)\le \delta_1$ a.e. in $\mathbb{R}$ and the following conditions hold true:
	\begin{itemize}
		\item If $\lim\limits_{x\to s^{+}}F'\left(x\right)\lim\limits_{x\to s^{-}}F'\left(x\right)=0$
		for some $s\in\mathbb{R}$, then $s=x_{0}$.
		\item There exist $\delta_{0},r_{x_{0}}>0$ such that
		\begin{align}
		0<\delta_{0}\le F'\left(x\right)\le\delta_{1}\quad\text{for a.e. }x\in\mathbb{R}\backslash\mathcal{B}\left(x_{0},r_{x_{0}}\right),
		\end{align}
		where $\mathcal{B}\left(x_{0},\delta_{x_0}\right)$ denotes a ball centered
		at $x_{0}$ with a radius $r_{x_{0}}$.
		\item For $x\in \overline{\mathcal{B}\left(x_{0},r_{x_0}\right)}$, $F'$ is non-decreasing.
	\end{itemize}
\end{definition}

Next, we introduce the space
\begin{align}
V^{\varepsilon}:=\left\{ u\in H^{1}\left(\Omega^{\varepsilon}\right):u=0\;\text{on }\Gamma^{\text{ext}}\right\}, 
\end{align}
equipped with the norm
\begin{align}
\left\Vert u\right\Vert _{V^{\varepsilon}}=\left(\int_{\Omega^{\varepsilon}}\left|\nabla u\right|^{2}dx\right)^{1/2}.
\end{align}

Cf. \cite[Lemma 2.1]{Cioranescu1999}, one can show the uniform-in-$\varepsilon$ equivalence between this norm and the usual $H^1$-norm by the Poincar\'e-type inequality.

\begin{lemma}\label{lem:poincare}
There exists a constant $C_p>0$ independent of $\varepsilon$ such that
	\begin{align}
	\left\Vert u\right\Vert^{2} _{L^{2}\left(\Omega^{\varepsilon}\right)}\le C_p\left\Vert \nabla u\right\Vert^{2} _{L^{2}\left(\Omega^{\varepsilon}\right)}\quad\text{for any }u\in V^{\varepsilon}.
	\end{align}
\end{lemma}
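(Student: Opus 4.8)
The plan is to transfer the inequality from the $\varepsilon$-dependent domain $\Omega^{\varepsilon}$ to the fixed domain $\Omega$ by a uniform extension operator, and then invoke the ordinary Poincar\'e inequality on $\Omega$. First I would recall the classical construction for periodically perforated Lipschitz domains (Cioranescu--Saint Jean Paulin, see \cite[Lemma 2.1]{Cioranescu1999} together with the geometric description in \cite{Khoa2015}): since the holes form an $\varepsilon$-periodic array that stays away from the outer boundary $\Gamma^{\text{ext}}=\partial\Omega$, there is a bounded linear extension operator
\begin{align}
P^{\varepsilon}\colon V^{\varepsilon}\longrightarrow H^{1}\left(\Omega\right),\qquad \left(P^{\varepsilon}u\right)\big|_{\Omega^{\varepsilon}}=u,\qquad P^{\varepsilon}u=0\ \text{on }\Gamma^{\text{ext}},
\end{align}
satisfying the homogeneous estimate
\begin{align}
\left\Vert \nabla\left(P^{\varepsilon}u\right)\right\Vert_{L^{2}\left(\Omega\right)}\le C_{0}\left\Vert \nabla u\right\Vert_{L^{2}\left(\Omega^{\varepsilon}\right)},
\end{align}
with $C_{0}$ depending only on the reference cell $Y$ and the shape of the hole, hence independent of $\varepsilon$. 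The point is that $P^{\varepsilon}$ is assembled cell by cell from one fixed extension on the reference perforated cell $Y^{*}=Y\setminus(\text{hole})$, and the change of variables $y=x/\varepsilon$ on each cell $\varepsilon(Y+k)$ leaves a \emph{homogeneous} gradient bound scale-invariant; the homogeneous Dirichlet datum is untouched because near $\Gamma^{\text{ext}}$ the medium is not perforated.

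Granting this, the remaining argument is short. Because $P^{\varepsilon}u$ vanishes on $\partial\Omega$ and $\Omega$ is a fixed bounded domain, the standard Poincar\'e inequality on $\Omega$ provides a constant $C_{\Omega}>0$, independent of $\varepsilon$ (as $\Omega$ itself is), with $\left\Vert P^{\varepsilon}u\right\Vert_{L^{2}(\Omega)}\le C_{\Omega}\left\Vert \nabla(P^{\varepsilon}u)\right\Vert_{L^{2}(\Omega)}$. Restricting to $\Omega^{\varepsilon}$ and chaining the estimates,
\begin{align}
\left\Vert u\right\Vert_{L^{2}\left(\Omega^{\varepsilon}\right)}\le\left\Vert P^{\varepsilon}u\right\Vert_{L^{2}\left(\Omega\right)}\le C_{\Omega}\left\Vert \nabla\left(P^{\varepsilon}u\right)\right\Vert_{L^{2}\left(\Omega\right)}\le C_{\Omega}C_{0}\left\Vert \nabla u\right\Vert_{L^{2}\left(\Omega^{\varepsilon}\right)},
\end{align}
and squaring gives the claim with $C_{p}=(C_{\Omega}C_{0})^{2}$. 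The announced equivalence with the usual $H^{1}\left(\Omega^{\varepsilon}\right)$-norm is then immediate, since $\left\Vert\nabla u\right\Vert_{L^{2}(\Omega^{\varepsilon})}^{2}\le\left\Vert u\right\Vert_{H^{1}(\Omega^{\varepsilon})}^{2}\le(1+C_{p})\left\Vert\nabla u\right\Vert_{L^{2}(\Omega^{\varepsilon})}^{2}$.

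I expect the genuine difficulty to be concentrated entirely in the first step, namely the \emph{uniformity in $\varepsilon$} of the extension bound; everything after that is soft. Since this is a textbook fact for the class of perforated domains used here, I would cite \cite[Lemma 2.1]{Cioranescu1999} rather than reproduce the construction. If one preferred to avoid the extension operator, an alternative is to argue directly on cells: rescale the Poincar\'e--Wirtinger inequality on $Y^{*}$ to each cell $\varepsilon(Y+k)\cap\Omega^{\varepsilon}$ to obtain $\left\Vert u-m_{k}^{\varepsilon}\right\Vert_{L^{2}}^{2}\le C\varepsilon^{2}\left\Vert\nabla u\right\Vert_{L^{2}}^{2}$ with $m_{k}^{\varepsilon}$ the cell average of $u$, then estimate the averages $m_{k}^{\varepsilon}$ through a discrete Poincar\'e inequality along the connected graph of cells anchored at the boundary layer where $u=0$, and sum over cells; the only subtle point there is, again, to keep the final constant independent of the number of cells, i.e.\ of $\varepsilon$.
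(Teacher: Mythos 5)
Your argument is correct and coincides with the paper's own treatment: the paper does not reprove the inequality but simply invokes \cite[Lemma 2.1]{Cioranescu1999}, whose proof is exactly the uniform-extension-plus-fixed-domain-Poincar\'e chain you describe. No discrepancies to report.
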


Moreover, we denote by $H^1_{0}(\Omega)$ the Hilbert space of weakly differentiable functions $u:\Omega \to \mathbb{R}$ that vanishes on the boundary in the sense of trace. We also use below the space $H^1_{\#}(Y_{l})$ to indicate functions in $H^1(Y_{l})$ that is $Y$-periodic and has zero mean value.

This is now the moment to state our working assumptions on data involved in the microscopic problem. Those include

$\left(\text{A}_{1}\right)$ The diffusion $\mathbf{A}$ is essentially
bounded, $Y$-periodic, symmetric and globally Lipschitz. It satisfies
the uniform ellipticity condition in the sense that we can find $\varepsilon$-independent
constants $\underline{\gamma},\overline{\gamma}>0$ such that
\begin{align}
\underline{\gamma}\left|\xi\right|^{2}\le \sum_{i,j=1}^{d} a_{ij}\left(\frac{x}{\varepsilon}\right)\xi_{i}\xi_{j}\le\overline{\gamma}\left|\xi\right|^{2}\quad\text{for all }\xi\in\mathbb{R}^{d}\;\text{and }1\le i,j\le d.
\end{align}

$\left(\text{A}_{2}\right)$ The reaction term $\mathcal{R}:\mathbb{R}\to\mathbb{R}$
is degenerate.

$\left(\text{A}_{3}\right)$ The internal source $f$ belongs to $L^2(\Omega)$.

\begin{remark}
The consideration of a degenerate class of the reaction/production term $\mathcal{R}$ starts from our mathematical wish to generalize the previous results in \cite{Khoa2015,khoa2018pore}, where we exploited linearization schemes under strong assumptions, e.g the global Lipschitz reaction accounting for the Monod equation. In \cite{Khoa2015}, we even needed the fact that the fraction of reaction and diffusion bounds has to be sufficiently small. Our generalization in this paper is based upon the molecular interaction of piecewise types. For example, they can be of the power-growth law $F(x) = \left|x\right|^{p},p>1$ in a neighborhood of the degenerate point $x_0$ (cf. Definition \ref{def:degenerate}), and follow any global Lipschitz rate outside this neighborhood. A certain example of this typical reaction is introduced in \eqref{eq:3.7}.
\end{remark}

\section{Settings of the iterative variational algorithm}\label{Sec:3}

There are several linearization methods investigated in the past and each with its modifications and improvements to serve certain classes of nonlinear partial differential equations. To give a very cursory glance, one may concern the Newton method, cf. \cite{Bergamaschi1999}, whose convergence requires the initial guess to be close to the true solution, albeit its quadratic convergence. The J\"ager--Ka\v cur scheme is also renowned for its outstanding performance in the approximations of one-dimensional parabolic problems with a linear convergence; see \cite{Jger1995}. In this paper, our method is conventionally in line with the so-called $L$-scheme extensively studied in many distinctive types of parabolic equations; cf. \cite{Mitra2018} and references cited therein for a short background concerning this typical scheme. In this sense, the so-called stabilization term is added to stabilize the entire linearized equation in the standard variational formulation. Thereby, a linear convergence is obtained under a suitable choice of the stabilization constant.

\begin{definition}\label{Defweaksol}
	For each $ \varepsilon > 0 $, a function $ u_{\varepsilon} \in H^1(\Omega^{\varepsilon})$ is said to be a weak solution to $ (P_{\varepsilon}) $ if it satisfies
	\begin{align}\label{eq:3.1}
	a(u_\varepsilon, \varphi)  + \varepsilon^{\alpha}	\langle \mathcal{R}(u_\varepsilon),\varphi \rangle_{L^2(\Omega^{\varepsilon})} = \langle f,\varphi \rangle_{L^2(\Omega^{\varepsilon})},
	\end{align}
	for all $\varphi \in V^{\varepsilon}$, where $ a $: $ H^1(\Omega^{\varepsilon}) \times H^1(\Omega^{\varepsilon}) \to \mathbb{R} $ is a bilinear mapping given by
	\begin{align}
	a(u,\varphi) := \int_{\Omega^{\varepsilon}} \textbf{A}(x/\varepsilon) \nabla u\cdot \nabla \varphi dx.
	\end{align}
\end{definition}

At this stage, we take into account the degeneracy of the reaction term $\mathcal{R}$, especially in the ball where its derivative is zero and non-decreasing (see again in Definition \ref{def:degenerate}). It is worth citing here the Jackson type estimates in the approximation theory of monotone functions by monotone polynomials. In principle, cf. \cite{Leviatan1988}, for every monotone non-decreasing function $\bar{f}\in C^{k}[-1,1]$, there are non-decreasing polynomials $p_n$, whose degree does not exceed $n$, such that
\begin{align}\label{hehe}
\sup_{x\in\left[-1,1\right]}\left|\bar{f}-p_{n}\right|\le Cn^{-k}\omega\left(\bar{f}^{(k)},n^{-1}\right),
\end{align}
where $C$ here is independent of $\bar{f}$ and $n$, and $\omega$ indicates the modulus of continuity of $\bar{f}^{(k)}$. Cf. \cite{Kimchi1976,Roulier1976} for $\bar{f}\in C^{2k}[-1,1]$ with certain conditions, if $\bar{f}$ possesses strict lower and upper bounds of the available derivatives, then for sufficiently large $n$, the best polynomial approximation to $\bar{f}$ also satisfies the same property. Besides, the corresponding derivatives of the best approximation approach the derivatives of $\bar{f}$, respectively. Cf. \cite{Leviatan2000} for a survey of recent developments of the polynomial approximation, the approximation process can also preserve the monotonicity of $\bar{f}'$.

The aforementioned references enable us to assume the existence of a regularization scheme for the degenerate $\mathcal{R}$. In this regard, a function $F_{\gamma}$ for $\gamma>0$ being as a regularization parameter is said to be a regularization of a degenerate function $F$ if one has
\begin{itemize}
	\item $0< \gamma \delta_0 \le  F_{\gamma}'(x)\le \delta_{1}$ for any $x\in \mathbb{R}$,
	\item $\left|F-F_{\gamma}\right|\le C\gamma^{\sigma}$ for  $\sigma>0$ and for any $x\in\mathbb{R}$.
\end{itemize}

Technically, the smallness of the lower bound of $F_{\gamma}'$ is taken to regularize the degeneracy of $F$ at $x_0$ as it is zero at this degenerate point. Thus, one may tacitly look for $F_{\gamma}$ such that $F_{\gamma}'(x_{0}) = \gamma\delta_{0}$ and should attempt to preserve the  ``shape" of $F$ through the regularization process. Note that this approach does not mean that the regularization scheme must be a linear mapping; in general, the process can still be nonlinear.

After regularization of the reaction term, the linearized problem in the variational setting is given as follows.

\begin{definition}\label{Defapproxweaksol}
	For each $\varepsilon>0$, a linear approximation of $u_{\varepsilon}$ that solves \eqref{eq:3.1} is defined as a sequence $\left\{ u_{\varepsilon}^{k}\right\} _{k\in\mathbb{N}^{*}}$ satisfying
	\begin{align}\label{prob: P^k_{varepsilon}}
	 a(u_\varepsilon^k, \varphi) + M	\langle u_\varepsilon^k,\varphi \rangle_{L^2(\Omega^{\varepsilon})}   = \langle f,\varphi \rangle_{L^2(\Omega^{\varepsilon})}  
	+  M\langle u_\varepsilon^{k-1},\varphi \rangle_{L^2(\Omega^{\varepsilon})}
	 - \varepsilon^{\alpha} \langle \mathcal{R}_{\gamma_{k}}(u^{k-1}_\varepsilon),\varphi \rangle_{L^2(\Omega^{\varepsilon})},
	\end{align}
	where the stabilization constant $M\ge 0$ and the regularization parameter $\gamma_{k}>0$ are selected later. The initial guess is taken as $u_{\varepsilon}^{0}=0$.
\end{definition}

In the following theorem, we prove that this sequence is well-defined and exists uniquely in $H^1(\Omega^{\varepsilon})$.

\begin{theorem}\label{thm:3.1-1}
	Assume $\left(\text{A}_{1}\right)$--$\left(\text{A}_{3}\right)$ hold. By choosing $M = \eta + \varepsilon^{\alpha}\delta_{1}$ for $\eta >0$, the approximate problem \eqref{prob: P^k_{varepsilon}} admits at most a weak solution in $H^1(\Omega^{\varepsilon})$.
\end{theorem}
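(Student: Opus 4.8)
The plan is to prove uniqueness — note that the theorem claims "at most a weak solution", so only uniqueness is at stake here, and the statement is phrased for a single iteration step $k$ with the previous iterate $u_\varepsilon^{k-1}$ regarded as given data. First I would suppose that $u_1$ and $u_2$ are two weak solutions of \eqref{prob: P^k_{varepsilon}} in $H^1(\Omega^\varepsilon)$ for the same right-hand side, and set $w := u_1 - u_2 \in V^\varepsilon$ (the homogeneous Dirichlet condition on $\Gamma^{\text{ext}}$ is inherited by the difference, so $w$ is an admissible test function). Subtracting the two variational identities kills the entire right-hand side, since it depends only on the fixed data $f$ and $u_\varepsilon^{k-1}$, leaving
\begin{align}\label{eq:uniqueness-diff}
a(w,\varphi) + M \langle w, \varphi \rangle_{L^2(\Omega^\varepsilon)} = 0 \qquad \text{for all } \varphi \in V^\varepsilon.
\end{align}

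Next I would test \eqref{eq:uniqueness-diff} with $\varphi = w$ itself. Using the coercivity from assumption $(\text{A}_1)$ — namely $a(w,w) = \int_{\Omega^\varepsilon} \mathbf{A}(x/\varepsilon)\nabla w \cdot \nabla w\, dx \ge \underline{\gamma}\|\nabla w\|_{L^2(\Omega^\varepsilon)}^2$ — together with $M = \eta + \varepsilon^\alpha \delta_1 \ge 0$ and $\langle w,w\rangle_{L^2(\Omega^\varepsilon)} = \|w\|_{L^2(\Omega^\varepsilon)}^2 \ge 0$, we obtain
\begin{align}
\underline{\gamma}\,\|\nabla w\|_{L^2(\Omega^\varepsilon)}^2 + M\,\|w\|_{L^2(\Omega^\varepsilon)}^2 \le a(w,w) + M\|w\|_{L^2(\Omega^\varepsilon)}^2 = 0.
\end{align}
Both terms on the left being non-negative forces $\|\nabla w\|_{L^2(\Omega^\varepsilon)} = 0$, hence $w$ is constant on each connected component of $\Omega^\varepsilon$; since $\Omega^\varepsilon$ is connected and $w$ vanishes on $\Gamma^{\text{ext}}$ (equivalently, invoking the Poincaré inequality of Lemma \ref{lem:poincare}, $\|w\|_{L^2(\Omega^\varepsilon)}^2 \le C_p\|\nabla w\|_{L^2(\Omega^\varepsilon)}^2 = 0$ directly), we conclude $w \equiv 0$, i.e. $u_1 = u_2$.

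This argument is essentially routine, so I do not expect a genuine obstacle; the only points requiring a little care are bookkeeping ones. One is to make explicit that the right-hand side of \eqref{prob: P^k_{varepsilon}} truly contains no dependence on the unknown $u_\varepsilon^k$ — the linearization is precisely what makes the problem affine, so subtraction gives the clean homogeneous identity \eqref{eq:uniqueness-diff} with no residual nonlinear term from $\mathcal{R}_{\gamma_k}$. The other is the sign of $M$: the choice $M = \eta + \varepsilon^\alpha\delta_1$ with $\eta > 0$ and $\alpha \ge 0$ guarantees $M > 0$, so even coercivity of $a$ alone is more than enough; one could equally run the argument with $M \ge 0$ and lean on Lemma \ref{lem:poincare}. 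I would present the Poincaré route since it shows uniqueness is robust and keeps the estimate self-contained.
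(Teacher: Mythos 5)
Your proof is correct and rests on the same mechanism as the paper's — coercivity of the bilinear form $\mathcal{B}(u,\varphi)=a(u,\varphi)+M\langle u,\varphi\rangle_{L^2(\Omega^{\varepsilon})}$ on $V^{\varepsilon}$ — the paper simply invokes the Lax--Milgram argument for this form (after noting the right-hand side is fixed data at each step $k$) rather than writing out the subtraction-and-test computation. The only point worth noting is that the paper's route also delivers \emph{existence} of each iterate, which the surrounding text relies on for the sequence to be well-defined, whereas your argument, faithful to the literal wording ``at most a weak solution,'' establishes uniqueness only.
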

\begin{proof}
	It suffices to consider the first-loop problem of  (\ref{prob: P^k_{varepsilon}}) (i.e. $ k=1 $), which reads as
	\begin{align}
	a\left(u_{\varepsilon}^{1},\varphi\right)+M\left\langle u_{\varepsilon}^{1},\varphi\right\rangle _{L^{2}\left(\Omega^{\varepsilon}\right)}=\left\langle \tilde{f},\varphi\right\rangle _{L^{2}\left(\Omega^{\varepsilon}\right)},
	\end{align}
	where $\tilde{f}:= f - \varepsilon^{\alpha}\mathcal{R}_{\gamma_{k}}(0)$. Hereby, we can introduce the bilinear form $ \mathcal{B}: H^1\left(\Omega^{\varepsilon}\right)\times H^1\left(\Omega^{\varepsilon}\right) \to \mathbb{R}$ given by $\mathcal{B}(u,\varphi):= a(u,\varphi) + M\left\langle u,\varphi\right\rangle _{L^{2}\left(\Omega^{\varepsilon}\right)}$.
	Hence, we complete the proof of the theorem by virtue of the natural $\varepsilon$-independent coerciveness and continuity of $\mathcal{B}$ in $H^1(\Omega^{\varepsilon})$.
\end{proof}



\begin{lemma}\label{lem:3.1}
	Let $\left\{ p_{k}\right\} _{k\in\mathbb{N}^{*}}$ and $\left\{ q_{k}\right\} _{k\in\mathbb{N}^{*}}$ be sequences of nonnegative real numbers that obey the following recursion
	\begin{align}
	p_{k}+q_{k}\le a_{k}+b_{k}q_{k-1},\quad k\ge 2,
	\end{align}
	where $a_{k}$ and $b_k$ are also nonnegative real numbers. Then, it holds
	\begin{align}
	p_{k}+q_{k}\le a_{k}+\sum_{j=2}^{k-1}a_{j}\prod_{i=j+1}^{k}b_{i}+q_{1}\prod_{i=2}^{k}b_{i}, \quad k\ge 3.
	\end{align}
\end{lemma}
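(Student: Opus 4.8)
The plan is to reduce the coupled recursion to a scalar one and then telescope it. Since every $p_k$ is nonnegative, the hypothesis $p_k+q_k\le a_k+b_kq_{k-1}$ gives in particular $q_k\le a_k+b_kq_{k-1}$ for all $k\ge 2$. I would first prove, by induction on $m\ge 1$, the telescoped bound
\[
q_m\le \sum_{j=2}^{m}a_j\prod_{i=j+1}^{m}b_i+q_1\prod_{i=2}^{m}b_i,
\]
with the usual conventions that an empty sum equals $0$ and an empty product equals $1$; the base case $m=1$ is then just $q_1\le q_1$. For the inductive step, assume the bound holds at level $m-1\ge 1$ and insert it into $q_m\le a_m+b_mq_{m-1}$: multiplying the bracketed expression by $b_m\ge 0$ preserves the inequality and converts each $\prod_{i=j+1}^{m-1}b_i$ into $\prod_{i=j+1}^{m}b_i$ and $\prod_{i=2}^{m-1}b_i$ into $\prod_{i=2}^{m}b_i$, while the additive $a_m$ becomes the $j=m$ term of the sum. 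This closes the induction.

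Once this is available, the lemma follows by one more substitution. For $k\ge 3$ we have $k-1\ge 2$, so the hypothesis yields $p_k+q_k\le a_k+b_kq_{k-1}$, and replacing $q_{k-1}$ by the telescoped bound gives
\[
p_k+q_k\le a_k+b_k\left(\sum_{j=2}^{k-1}a_j\prod_{i=j+1}^{k-1}b_i+q_1\prod_{i=2}^{k-1}b_i\right)=a_k+\sum_{j=2}^{k-1}a_j\prod_{i=j+1}^{k}b_i+q_1\prod_{i=2}^{k}b_i,
\]
which is exactly the asserted estimate. Alternatively one could induct directly on the quantity $p_k+q_k$, but separating off the $a_k$ term as above keeps the index bookkeeping transparent.

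There is no genuine analytic difficulty here — this is a discrete Gronwall-type argument — so the only thing that needs care is the index bookkeeping: one must check that the $j=k$ contribution of the fully telescoped sum reproduces exactly the standalone $a_k$ in the statement (this is where the empty product $\prod_{i=k+1}^{k}b_i=1$ is used), and that the lower limit $k\ge 3$ is precisely what makes the bound for $q_{k-1}$ sit in the nontrivial ($m\ge 2$) regime of the induction. The nonnegativity hypotheses on $a_k,b_k,p_k,q_k$ enter tacitly when $p_k$ is dropped and when multiplication by $b_m$ is used to preserve the direction of the inequalities.
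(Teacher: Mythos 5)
Your proof is correct and is essentially the paper's argument: both unroll the recursion by induction, dropping $p_k\ge 0$ to bound $q_{k-1}$ and multiplying through by $b_k\ge 0$; you merely package the induction as a standalone telescoped bound on $q_m$ followed by one final substitution, whereas the paper inducts directly on the stated inequality for $p_k+q_k$. No gaps.
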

\begin{proof}
	The proof is trivial and it can be found in \ref{app:1}.
\end{proof}

\begin{theorem}\label{thm:3.1}
	Let $\alpha \ge 0$. Assume $\left(\text{A}_{1}\right)$--$\left(\text{A}_{3}\right)$ hold and let $\gamma_k>0$ be non-increasing with $\sigma>0$. Then by choosing
	$M = \eta + \varepsilon^{\alpha}\delta_{1}$ for $\eta >0$,
	the sequence $\left\{ u_{\varepsilon}^{k}\right\} _{k\in\mathbb{N}^{*}}$ of the variational problem \eqref{prob: P^k_{varepsilon}} possesses the following property
	\begin{align}\label{eq:3.4-1}
	&\frac{\underline{\gamma}}{\eta + \delta_1 + \underline{\gamma}C_p^{-1}}\left\Vert \nabla u_{\varepsilon}^{k} - \nabla u_{\varepsilon}^{k-1}\right\Vert _{L^{2}\left(\Omega^{\varepsilon}\right)}^{2}
	+\left\Vert u_{\varepsilon}^{k} - u_{\varepsilon}^{k-1}\right\Vert _{L^{2}\left(\Omega^{\varepsilon}\right)}^{2}\\ \nonumber
	& \le \frac{C\gamma_{k-1}^{2\sigma}}{\gamma_{k}} +C\sum_{j=2}^{k-1}\left(\frac{M}{\underline{\gamma}C_p^{-1}+M}\right)^{k-j}\frac{\gamma_{j-1}^{2\sigma}}{\gamma_{j}}
	+\left\Vert u_{\varepsilon}^{1}\right\Vert _{L^{2}\left(\Omega^{\varepsilon}\right)}^{2}\left(\frac{M}{\underline{\gamma}C_p^{-1}+M}\right)^{k-1}.
	\end{align}
\end{theorem}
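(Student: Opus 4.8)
The plan is to test the linearized equation \eqref{prob: P^k_{varepsilon}} at steps $k$ and $k-1$ with the difference $\varphi = u_\varepsilon^k - u_\varepsilon^{k-1}$ as the test function, and subtract. Writing $e^k := u_\varepsilon^k - u_\varepsilon^{k-1}$, this yields
\begin{align*}
a(e^k, e^k) + M\langle e^k, e^k\rangle_{L^2(\Omega^\varepsilon)}
 = M\langle e^{k-1}, e^k\rangle_{L^2(\Omega^\varepsilon)}
 - \varepsilon^\alpha \langle \mathcal{R}_{\gamma_k}(u^{k-1}_\varepsilon) - \mathcal{R}_{\gamma_{k-1}}(u^{k-2}_\varepsilon), e^k\rangle_{L^2(\Omega^\varepsilon)}.
\end{align*}
On the left, $(\text{A}_1)$ gives $a(e^k,e^k)\ge\underline{\gamma}\|\nabla e^k\|^2_{L^2(\Omega^\varepsilon)}$, and I would split this coercivity bound, using Lemma~\ref{lem:poincare} to convert a fraction of $\|\nabla e^k\|^2$ into $\underline{\gamma}C_p^{-1}\|e^k\|^2_{L^2(\Omega^\varepsilon)}$, so that the left-hand side dominates the combination appearing on the left of \eqref{eq:3.4-1} up to the factor $\underline{\gamma}/(\eta+\delta_1+\underline{\gamma}C_p^{-1})$ together with a clean $(\underline{\gamma}C_p^{-1}+M)\|e^k\|^2_{L^2(\Omega^\varepsilon)}$ term.

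The heart of the estimate is the right-hand side. The key algebraic trick is to add and subtract $\mathcal{R}_{\gamma_k}(u^{k-2}_\varepsilon)$ inside the reaction difference, splitting it as
\[
\bigl[\mathcal{R}_{\gamma_k}(u^{k-1}_\varepsilon) - \mathcal{R}_{\gamma_k}(u^{k-2}_\varepsilon)\bigr]
 + \bigl[\mathcal{R}_{\gamma_k}(u^{k-2}_\varepsilon) - \mathcal{R}_{\gamma_{k-1}}(u^{k-2}_\varepsilon)\bigr].
\]
For the first bracket I use the mean value theorem together with the uniform bound $\mathcal{R}_{\gamma_k}'\le\delta_1$ from the regularization property, giving a pointwise bound $\delta_1|e^{k-1}|$; paired with Cauchy--Schwarz and Young's inequality this produces a term $\le \tfrac{1}{2}(\text{something})\|e^{k-1}\|^2_{L^2(\Omega^\varepsilon)}$ plus an absorbable $\|e^k\|^2$ contribution, and crucially the $\varepsilon^\alpha\delta_1$ that multiplies it is exactly the piece of $M$ that makes the coefficient of $\|e^{k-1}\|^2$ come out as $M/(\underline{\gamma}C_p^{-1}+M)$ after dividing through. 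For the second bracket I use the regularization approximation $|\mathcal{R}_{\gamma_k}(x)-\mathcal{R}_{\gamma_{k-1}}(x)|\le|\mathcal{R}_{\gamma_k}(x)-\mathcal{R}(x)|+|\mathcal{R}(x)-\mathcal{R}_{\gamma_{k-1}}(x)|\le C(\gamma_k^\sigma+\gamma_{k-1}^\sigma)\le C\gamma_{k-1}^\sigma$ since $\gamma_k$ is non-increasing; multiplied by $\varepsilon^\alpha\le C$ and the test function, then Young's inequality with a weight $\gamma_k$ on the $\|e^k\|^2$ side, this yields the forcing term $C\gamma_{k-1}^{2\sigma}/\gamma_k$ after absorption. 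The $M\langle e^{k-1},e^k\rangle$ term is handled the same way, contributing the rest of the $\|e^{k-1}\|^2$ coefficient. Collecting everything and dividing by $\underline{\gamma}C_p^{-1}+M$, one arrives at a one-step recursion of the exact form
\[
p_k + q_k \le a_k + b\, q_{k-1},\qquad b = \frac{M}{\underline{\gamma}C_p^{-1}+M} = \frac{\eta+\varepsilon^\alpha\delta_1}{\underline{\gamma}C_p^{-1}+\eta+\varepsilon^\alpha\delta_1}<1,
\]
with $p_k = \tfrac{\underline{\gamma}}{\eta+\delta_1+\underline{\gamma}C_p^{-1}}\|\nabla e^k\|^2_{L^2(\Omega^\varepsilon)}$, $q_k = \|e^k\|^2_{L^2(\Omega^\varepsilon)}$, and $a_k = C\gamma_{k-1}^{2\sigma}/\gamma_k$.

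Finally I invoke Lemma~\ref{lem:3.1} with this $(p_k,q_k,a_k,b_k\equiv b)$ to unfold the recursion into the closed form displayed in \eqref{eq:3.4-1}, noting that $\prod_{i=j+1}^{k}b = b^{k-j}$ and $\prod_{i=2}^{k}b = b^{k-1}$ reproduce the geometric weights exactly. The main obstacle I anticipate is bookkeeping the constants so that the stabilization choice $M = \eta+\varepsilon^\alpha\delta_1$ makes the $\varepsilon^\alpha\delta_1$ terms cancel against the reaction's Lipschitz bound in precisely the way that leaves $\eta$ (not $\varepsilon^\alpha$) controlling the contraction rate; this is where one has to be careful to split the coercivity constant $\underline{\gamma}$ correctly between the gradient term kept on the left and the Poincaré conversion, and to choose the Young's-inequality weights so that all $\|e^k\|^2$ contributions on the right are strictly absorbed while the coefficient of $\|e^{k-1}\|^2$ lands on $M$ rather than something larger. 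Uniformity in $\varepsilon$ is automatic once this is done, since $\varepsilon^\alpha$ is bounded for $\alpha\ge 0$ and all remaining constants ($\underline{\gamma}$, $C_p$, $\delta_1$, $\eta$) are $\varepsilon$-independent.
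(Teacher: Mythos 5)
Your overall architecture matches the paper's proof: form the difference equation for $e^k=u_\varepsilon^k-u_\varepsilon^{k-1}$, test with $e^k$, split the reaction difference by inserting $\mathcal{R}_{\gamma_k}(u_\varepsilon^{k-2})$, bound the consistency part by $C\gamma_{k-1}^{\sigma}$ and apply Young's inequality with weight $\gamma_k$ to produce the forcing term $C\gamma_{k-1}^{2\sigma}/\gamma_k$, convert part of the coercivity via the Poincar\'e lemma, and unfold with Lemma~\ref{lem:3.1}. However, there is a genuine gap in the central contraction step. You propose to bound the bracket $\varepsilon^{\alpha}\bigl[\mathcal{R}_{\gamma_k}(u_\varepsilon^{k-1})-\mathcal{R}_{\gamma_k}(u_\varepsilon^{k-2})\bigr]$ by $\varepsilon^{\alpha}\delta_1|e^{k-1}|$ via the mean value theorem and to handle the stabilization term $M\langle e^{k-1},e^k\rangle$ ``the same way'' separately. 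Adding these two bounds through the triangle inequality gives a total coefficient $M+\varepsilon^{\alpha}\delta_1$ in front of $\Vert e^{k-1}\Vert_{L^2}\Vert e^k\Vert_{L^2}$, not $M$. The resulting recursion factor would be $(M+\varepsilon^{\alpha}\delta_1)/(\underline{\gamma}C_p^{-1}+M)$, which is strictly larger than the claimed $M/(\underline{\gamma}C_p^{-1}+M)$ and need not even be below $1$ (take $\alpha=0$ with $2\delta_1>\underline{\gamma}C_p^{-1}$). Your remark that ``the $\varepsilon^{\alpha}\delta_1$ is exactly the piece of $M$'' gestures at the right cancellation, but the mechanism you describe does not produce it.

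The cancellation requires keeping the two terms together: define $g_{\gamma_k}(t):=\varepsilon^{\alpha}\mathcal{R}_{\gamma_k}(t)-Mt$, so the right-hand side contains $\langle g_{\gamma_k}(u_\varepsilon^{k-2})-g_{\gamma_k}(u_\varepsilon^{k-1}),e^k\rangle$. Because the regularization satisfies the \emph{one-sided} bounds $0<\gamma_k\delta_0\le\mathcal{R}_{\gamma_k}'\le\delta_1$ and the choice $M=\eta+\varepsilon^{\alpha}\delta_1$ guarantees $\varepsilon^{\alpha}\mathcal{R}_{\gamma_k}'\le M$, one gets $|g_{\gamma_k}'|\le M-\varepsilon^{\alpha}\gamma_k\delta_0$; it is the monotonicity (sign) of $\mathcal{R}_{\gamma_k}'$, not merely its magnitude, that makes the stabilization and reaction contributions subtract rather than add. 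The slack $\varepsilon^{\alpha}\gamma_k\delta_0$ is then exactly what absorbs the $\tfrac{1}{2}\varepsilon^{\alpha}\delta_0\gamma_k\Vert e^k\Vert^2_{L^2}$ remainder from your weighted Young step on the consistency term, so that the coefficient of $\Vert e^k\Vert^2_{L^2}$ on the right closes at precisely $M/2$ and the recursion constant lands on $M/(\underline{\gamma}C_p^{-1}+M)$ as stated in \eqref{eq:3.4-1}. With this repair the rest of your argument goes through as written.
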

\begin{proof}
	Consider $w_{\varepsilon}^{k}:=u_{\varepsilon}^{k}-u_{\varepsilon}^{k-1}$ for $k\ge 2$ as a difference function between the $k$th and $(k-1)$th steps of approximation. Then the difference equation is provided by
	\begin{align}\label{eq:3.3}
	a(w_\varepsilon^k, \varphi) + M	\langle w_\varepsilon^k,\varphi \rangle_{L^2(\Omega^{\varepsilon})} 	
	=   M\langle w_\varepsilon^{k-1},\varphi \rangle_{L^2(\Omega^{\varepsilon})}
	 - \varepsilon^{\alpha} \langle \mathcal{R}_{\gamma_{k}}(u^{k-1}_\varepsilon)-\mathcal{R}_{\gamma_{k-1}}(u^{k-2}_\varepsilon),\varphi \rangle_{L^2(\Omega^{\varepsilon})}.
	\end{align}
	
	Define  $g_{\gamma_k}\left(t\right):=\varepsilon^{\alpha}\mathcal{R}_{\gamma_k}\left(t\right)-Mt$ for $t\in\mathbb{R}$. By taking the test function $\varphi = w_{\varepsilon}^{k}$, \eqref{eq:3.3} becomes
	\begin{align*}
	 & \int_{\Omega^{\varepsilon}}\mathbf{A}\left(\frac{x}{\varepsilon}\right)\left|\nabla w_{\varepsilon}^{k}\right|^{2}dx+M\left\Vert w_{\varepsilon}^{k}\right\Vert _{L^{2}\left(\Omega^{\varepsilon}\right)}^{2}\\&
	 =\varepsilon^{\alpha}\left\langle \mathcal{R}_{\gamma_{k-1}}\left(u_{\varepsilon}^{k-2}\right)-\mathcal{R}_{\gamma_{k}}\left(u_{\varepsilon}^{k-2}\right),w_{\varepsilon}^{k}\right\rangle _{L^{2}\left(\Omega^{\varepsilon}\right)} +\left\langle g_{\gamma_{k}}\left(u_{\varepsilon}^{k-2}\right)-g_{\gamma_{k}}\left(u_{\varepsilon}^{k-1}\right),w_{\varepsilon}^{k}\right\rangle _{L^{2}\left(\Omega^{\varepsilon}\right)}.
	\end{align*}
	
	Observe that $\left|g'_{\gamma_{k}}\right|\le M-\varepsilon^{\alpha}\gamma_{k}\delta_{0}$ and in view of the fact that
	\begin{align}
	\left|\mathcal{R}_{\gamma_{k-1}}-\mathcal{R}_{\gamma_{k}}\right|\le C\left(\gamma_{k-1}^{\sigma}+\gamma_{k}^{\sigma}\right),
	\end{align}
	resulting from the regularization factor we apply, we estimate that
	\begin{align*}
	 &\underline{\gamma}\left\Vert \nabla w_{\varepsilon}^{k}\right\Vert _{L^{2}\left(\Omega^{\varepsilon}\right)}^{2}+M\left\Vert w_{\varepsilon}^{k}\right\Vert _{L^{2}\left(\Omega^{\varepsilon}\right)}^{2}
	 \\ &\le \underbrace{C\varepsilon^{\alpha}\left(\gamma_{k-1}^{\sigma}+\gamma_{k}^{\sigma}\right)\left\Vert w_{\varepsilon}^{k}\right\Vert _{L^{2}\left(\Omega^{\varepsilon}\right)}}_{:=I_{1}}+
	 \underbrace{\left(M-\varepsilon^{\alpha}\gamma_{k}\delta_{0}\right)\left\Vert w_{\varepsilon}^{k-1}\right\Vert _{L^{2}\left(\Omega^{\varepsilon}\right)}\left\Vert w_{\varepsilon}^{k}\right\Vert _{L^{2}\left(\Omega^{\varepsilon}\right)}}_{:=I_{2}}.
	\end{align*}
	
	Upon the monotonicity of $\gamma_{k}$, we use the Young inequality to get
	\begin{align*}
	I_{1}  \le\frac{C\varepsilon^{\alpha}\gamma_{k-1}^{2\sigma}}{\delta_{0}\gamma_{k}}+\frac{\varepsilon^{\alpha}\delta_{0}\gamma_{k}}{2}\left\Vert w_{\varepsilon}^{k}\right\Vert _{L^{2}\left(\Omega^{\varepsilon}\right)}^{2}.
	\end{align*}
	
	Using again the Young inequality, we also obtain the upper bound of $I_2$ as follows:
	\begin{align*}
	I_{2}  \le\frac{M-\varepsilon^{\alpha}\gamma_{k}\delta_{0}}{2}\left(\left\Vert w_{\varepsilon}^{k-1}\right\Vert _{L^{2}\left(\Omega^{\varepsilon}\right)}^{2}+\left\Vert w_{\varepsilon}^{k}\right\Vert _{L^{2}\left(\Omega^{\varepsilon}\right)}^{2}\right).
	\end{align*}
	
	Thereby, after some rearrangements, we find that
	\begin{align}\label{eq:3.4} 
	2\underline{\gamma}\left\Vert \nabla w_{\varepsilon}^{k}\right\Vert _{L^{2}\left(\Omega^{\varepsilon}\right)}^{2}+M\left\Vert w_{\varepsilon}^{k}\right\Vert _{L^{2}\left(\Omega^{\varepsilon}\right)}^{2} \le\frac{C\gamma_{k-1}^{2\sigma}}{\delta_{0}\gamma_{k}}\varepsilon^{\alpha}
	+\left(M-\varepsilon^{\alpha}\gamma_{k}\delta_{0}\right)\left\Vert w_{\varepsilon}^{k-1}\right\Vert _{L^{2}\left(\Omega^{\varepsilon}\right)}^{2}.
	\end{align}
	
	At present, we take $M = \eta + \varepsilon^{\alpha}\delta_{1}$ for $\eta>0$ (independent of $\varepsilon$ and $k$) in \eqref{eq:3.4}. Then we apply the Poincar\'e inequality (cf. Lemma \ref{lem:poincare}) to arrive at
	\begin{align}
	\frac{\underline{\gamma}}{\eta + \delta_1 + \underline{\gamma}C_p^{-1}}\left\Vert \nabla w_{\varepsilon}^{k}\right\Vert _{L^{2}\left(\Omega^{\varepsilon}\right)}^{2}+
	\left\Vert w_{\varepsilon}^{k}\right\Vert _{L^{2}\left(\Omega^{\varepsilon}\right)}^{2}\le\frac{C\gamma_{k-1}^{2\sigma}}{\gamma_{k}}\varepsilon^{\alpha}
	+ \frac{M}{\underline{\gamma}C_p^{-1} + M}\left\Vert w_{\varepsilon}^{k-1}\right\Vert _{L^{2}\left(\Omega^{\varepsilon}\right)}^{2},
	\end{align}
	and it thus follows from Lemma \ref{lem:3.1} that
	\begin{align}\label{eq:3.5}
	 \frac{\underline{\gamma}}{\eta + \delta_1 + \underline{\gamma}C_p^{-1}}&\left\Vert \nabla w_{\varepsilon}^{k}\right\Vert _{L^{2}\left(\Omega^{\varepsilon}\right)}^{2}
	+\left\Vert w_{\varepsilon}^{k}\right\Vert _{L^{2}\left(\Omega^{\varepsilon}\right)}^{2}\\ \nonumber
	& \le a_{k}+\sum_{j=2}^{k-1}a_{j}\prod_{i=j+1}^{k}b_{i}
	+\left\Vert u_{\varepsilon}^{1}\right\Vert _{L^{2}\left(\Omega^{\varepsilon}\right)}^{2}\prod_{i=2}^{k}b_{i},\quad k\ge 3,
	\end{align}
	where we have denoted by
	\begin{align*}
	a_{k}  :=\frac{C\gamma_{k-1}^{2\sigma}}{\gamma_{k}},\;
	b_{k}  := \frac{M}{\underline{\gamma}C_p^{-1} + M}.
	\end{align*}
	
	Naturally, $b_k\in(0,1)$ and thus the product of $b_i$ approaches 0 when $k$ tends to infinity in the sense that
	\begin{align}
	\prod_{i=j+1}^{k}b_{i}=\left(\frac{M}{\underline{\gamma}C_p^{-1}+M}\right)^{k-j}\quad\text{for }j\ge1.
	\end{align}
	This completes the proof of the theorem.
\end{proof}

In \eqref{eq:3.4-1}, we observe that the stability of the scheme is essentially dependent on the partial sums of the series of $a_k$, particularly of the choice of the regularization parameter $\gamma_{k}$. Since in this paper we obtain a strong convergence along with a particular error estimate, harmonic series are not reliable in ensuring that the sequence $\left\{ u_{\varepsilon}^{k}\right\} _{k\in\mathbb{N}^{*}}$ is Cauchy in $H^1\left(\Omega^{\varepsilon}\right)$. This is hindered by the convergence-towards-zero of the series of $a_k$ when using the standard triangle inequality. In simpler terms, harmonic series (or even hyperharmonic series) are mostly either divergent or convergent to a non-zero constant. It is worth noting that the product of $b_k$ possesses an exponential-like decay as $k\to \infty$. Then the same behavior should be applied to the series of $a_k$ by looking for a geometric progression of $a_k$.



As a concrete example, we take into account the power-law reaction rate in a unit domain, which reads as
\begin{align}\label{eq:3.7}
\mathcal{R}\left(u\right)=\begin{cases}
u^{p} & \text{for }u\in\left[0,1\right], p>1, \\
u & \text{for }u<0,\\
u & \text{for }u>1.
\end{cases}
\end{align}
Therefore, in the interval $[0,1]$ we can choose a regularization of $\mathcal{R}$ as follows:
\begin{align}
\mathcal{R}_{\gamma_{k}}(u)=\max\left\{ u^{p},\frac{\delta_{0}}{2^{(p-1)^2 + (k+1)(p-1)}}u\right\},\; k\in \mathbb{N}^{*},
\end{align}
leading to the fact that $\frac{\delta_0}{2^{(p-1)^2 + (k+1)(p-1)}}\le \mathcal{R}_{\gamma_{k}}' \le 1$ and the following estimate
\begin{align}
\left|\mathcal{R}\left(u\right)-\mathcal{R}_{\gamma_{k}}\left(u\right)\right|\le p^{\frac{1}{1-p}}\left|1-p\right|\left(\frac{\delta_{0}}{2^{(p-1)^2 + (k+1)(p-1)}}\right)^{\frac{p}{p-1}}.
\end{align}
In this way, we indicate that $\gamma_{k}=\frac{1}{2^{(p-1)^2 + (k+1)(p-1)}}$ and $\sigma = 1+\frac{1}{p-1}>1$. It is worth noting that
\begin{align}
\frac{\gamma_{k-1}^{2\sigma}}{\gamma_{k}^{2}}=\frac{1}{2^{2(p-1)}}\frac{2^{2(k+1)(p-1)}}{2^{2k(p-1)}}4^{-k}=4^{-k},
\end{align}
and thereupon, we, in accordance with the estimate \eqref{eq:3.4-1}, provide the following theorem.

\begin{theorem}\label{thm:3.2}
	Under the assumptions of Theorem \ref{thm:3.1}, if we further choose $\gamma_{k}$ in such a way that
	\begin{align}
	\frac{\gamma_{k-1}^{\sigma}}{\gamma_{k}}\le C\omega^{k},\quad\text{for any } k\in\mathbb{N}^{*},\omega \in(0,1),
	\end{align}
	and suppose that
	\begin{align}\label{dk1}
	\overline{\omega}:=\omega + \sqrt{\frac{M}{\underline{\gamma}C_p^{-1}+M}} < 1.
	\end{align}
	Then, the iterative sequence $\left\{ u_{\varepsilon}^{k}\right\} _{k\in\mathbb{N}^{*}}$ is Cauchy in $H^1\left(\Omega^{\varepsilon}\right)$. Consequently, the microscopic problem $ (P_{\varepsilon}) $ admits a unique solution $u_{\varepsilon}$ in $H^1\left(\Omega^{\varepsilon}\right)$.
\end{theorem}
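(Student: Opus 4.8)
The plan is to promote the one-step estimate \eqref{eq:3.4-1} of Theorem \ref{thm:3.1} to a genuine geometric decay $\|u_\varepsilon^k-u_\varepsilon^{k-1}\|_{H^1(\Omega^\varepsilon)}\le C\overline{\omega}^{k}$, to conclude from it that $\{u_\varepsilon^k\}$ is Cauchy in $H^1(\Omega^\varepsilon)$, and then to pass to the limit $k\to\infty$ in the linearized variational identity \eqref{prob: P^k_{varepsilon}} in order to recover \eqref{eq:3.1}; the uniqueness part will be handled separately through the ellipticity in $(\text{A}_1)$ and the monotonicity of $\mathcal{R}$.

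First, writing $w_\varepsilon^k:=u_\varepsilon^k-u_\varepsilon^{k-1}$ and keeping the shorthands $a_k$ and $b:=M/(\underline{\gamma}C_p^{-1}+M)\in(0,1)$ from the proof of Theorem \ref{thm:3.1}, I would observe via Lemma \ref{lem:poincare} and the positivity of the coefficients on the left of \eqref{eq:3.4-1} that $\|w_\varepsilon^k\|_{H^1(\Omega^\varepsilon)}^2\le C\bigl(a_k+\sum_{j=2}^{k-1}a_j b^{k-j}+\|u_\varepsilon^1\|_{L^2(\Omega^\varepsilon)}^2 b^{k-1}\bigr)$ for $k\ge 3$. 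Because $\gamma_k$ is non-increasing, $\gamma_{k-1}^{\sigma-1}=\gamma_{k-1}^{\sigma}/\gamma_{k-1}\le\gamma_{k-1}^{\sigma}/\gamma_k\le C\omega^k$, which forces $\sigma>1$ and $\gamma_k\to 0$, and also gives $\gamma_{k-1}^{\sigma}\le C\omega^k$; hence $\sqrt{a_k}\le C\omega^k$ and $\sup_{\mathbb{R}}|\mathcal{R}-\mathcal{R}_{\gamma_k}|\le C\gamma_k^{\sigma}\to 0$. Taking square roots term by term (using $\sqrt{\sum_j x_j}\le\sum_j\sqrt{x_j}$), invoking the convolution bound $\sum_{j=2}^{k-1}\omega^j b^{(k-j)/2}\le(\omega+\sqrt{b})^k=\overline{\omega}^{k}$, and bounding $\|u_\varepsilon^1\|_{L^2(\Omega^\varepsilon)}$ by testing the $k=1$ instance of \eqref{prob: P^k_{varepsilon}} with $u_\varepsilon^1$, I reach $\|w_\varepsilon^k\|_{H^1(\Omega^\varepsilon)}\le C\overline{\omega}^{k}$ with $\overline{\omega}\in(0,1)$ by \eqref{dk1}. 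Then $\sum_k\overline{\omega}^{k}<\infty$, so $\|u_\varepsilon^m-u_\varepsilon^n\|_{H^1(\Omega^\varepsilon)}\le\sum_{k=n+1}^{m}\|w_\varepsilon^k\|_{H^1(\Omega^\varepsilon)}\to 0$ as $m,n\to\infty$, and by completeness of $V^\varepsilon$ there is a strong $H^1$-limit $u_\varepsilon\in V^\varepsilon$.

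It then remains to identify the limit and to prove uniqueness. In \eqref{prob: P^k_{varepsilon}} the terms $a(u_\varepsilon^k,\varphi)$, $M\langle u_\varepsilon^k,\varphi\rangle_{L^2(\Omega^\varepsilon)}$ and $M\langle u_\varepsilon^{k-1},\varphi\rangle_{L^2(\Omega^\varepsilon)}$ pass to the limit by continuity of $a(\cdot,\cdot)$ and strong $L^2$-convergence, whereas for the reaction term the $\delta_1$-Lipschitz bound on $\mathcal{R}_{\gamma_k}$ yields $\|\mathcal{R}_{\gamma_k}(u_\varepsilon^{k-1})-\mathcal{R}(u_\varepsilon)\|_{L^2(\Omega^\varepsilon)}\le\delta_1\|u_\varepsilon^{k-1}-u_\varepsilon\|_{L^2(\Omega^\varepsilon)}+\bigl(\sup_{\mathbb{R}}|\mathcal{R}-\mathcal{R}_{\gamma_k}|\bigr)|\Omega^\varepsilon|^{1/2}\to 0$; letting $k\to\infty$ cancels the two $M$-terms and recovers \eqref{eq:3.1}, so $u_\varepsilon$ is a weak solution of $(P_\varepsilon)$. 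For uniqueness, if $u_\varepsilon,\widetilde{u}_\varepsilon\in V^\varepsilon$ are two weak solutions, subtracting the identities and testing with $z:=u_\varepsilon-\widetilde{u}_\varepsilon$ gives, using $(\text{A}_1)$ and the monotonicity of $\mathcal{R}$ (valid since $0\le\mathcal{R}'\le\delta_1$ by $(\text{A}_2)$), the bound $\underline{\gamma}\|\nabla z\|_{L^2(\Omega^\varepsilon)}^2\le-\varepsilon^\alpha\langle\mathcal{R}(u_\varepsilon)-\mathcal{R}(\widetilde{u}_\varepsilon),z\rangle_{L^2(\Omega^\varepsilon)}\le 0$, whence $\nabla z=0$ and therefore $z=0$ by Lemma \ref{lem:poincare}.

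I expect the crux to be the second step, namely turning the squared, ``memory-laden'' estimate \eqref{eq:3.4-1} into the clean bound $\|w_\varepsilon^k\|_{H^1(\Omega^\varepsilon)}\le C\overline{\omega}^{k}$: one cannot take the square root of the accumulated sum $\sum_j a_j b^{k-j}$ directly, so it has to be dominated first by a single geometric sequence, and this is precisely where the hypotheses $\gamma_{k-1}^{\sigma}/\gamma_k\le C\omega^k$ and \eqref{dk1} enter --- they make the two competing ratios $\omega$ (from the regularization schedule) and $\sqrt{b}$ (from the stabilization and Poincar\'e constants) combine into $\overline{\omega}<1$ --- while one must also keep the constants, in particular $b$ through $M=\eta+\varepsilon^\alpha\delta_1$ with $\varepsilon<1$, under control so that $\overline{\omega}$ remains strictly below $1$.
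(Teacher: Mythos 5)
Your argument is correct and follows essentially the same route as the paper: the accumulated sum from \eqref{eq:3.4-1} is dominated by a single geometric sequence via the binomial bound $\sum_{j}\omega^{j}(\sqrt{b})^{k-j}\le(\omega+\sqrt{b})^{k}$ (the paper's ``binomial identity''), the triangle inequality then gives the Cauchy property, and the limit is identified by passing to the limit in \eqref{prob: P^k_{varepsilon}} using the $\delta_1$-Lipschitz bound on $\mathcal{R}_{\gamma_k}$ together with $\sup_{\mathbb{R}}|\mathcal{R}-\mathcal{R}_{\gamma_k}|\to 0$. Your explicit uniqueness step via the monotonicity of $\mathcal{R}$ and the coercivity in $(\text{A}_1)$ is a useful addition, since the paper only asserts uniqueness of the weak solution without detailing this argument.
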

\begin{proof}
	According to Theorem \ref{thm:3.1}, it is straightforward to find an $\varepsilon$-independent upper bound for \eqref{eq:3.4-1} as follows:
	\begin{align*}
	\sqrt{\frac{\underline{\gamma}}{\eta+\delta_{1}+\underline{\gamma}C_{p}^{-1}}}\left\Vert \nabla u_{\varepsilon}^{k}-\nabla u_{\varepsilon}^{k-1}\right\Vert _{L^{2}\left(\Omega^{\varepsilon}\right)} +\left\Vert u_{\varepsilon}^{k}-u_{\varepsilon}^{k-1}\right\Vert _{L^{2}\left(\Omega^{\varepsilon}\right)}
	 \le C\left(\omega+\sqrt{\frac{M}{\underline{\gamma}C_{p}^{-1}+M}}\right)^{k-1},
	\end{align*}
	where we have essentially used the binomial identity.
	Thereby, for any $k,r\in \mathbb{N}^{*}$ we obtain
	\begin{align}\label{eq:3.6}
	\sqrt{\frac{\underline{\gamma}}{\eta + \delta_1 + \underline{\gamma}C_p^{-1}}}& \left\Vert \nabla\left(u_{\varepsilon}^{k+r}-u_{\varepsilon}^{k}\right)\right\Vert _{L^{2}\left(\Omega^{\varepsilon}\right)}+\left\Vert u_{\varepsilon}^{k+r}-u_{\varepsilon}^{k}\right\Vert _{L^{2}\left(\Omega^{\varepsilon}\right)}\\\nonumber
	& \le C\left(\overline{\omega}^{k+r-1}+\overline{\omega}^{k+r-2}+\ldots+\overline{\omega}^{k}\right)\le\frac{C\overline{\omega}^{k}\left(1-\overline{\omega}^{r}\right)}{1-\overline{\omega}},
	\end{align}
	using the standard triangle inequality. This way we prove that $\left\{ u_{\varepsilon}^{k}\right\} _{k\in\mathbb{N}^{*}}$ is Cauchy in $H^1\left(\Omega^{\varepsilon}\right)$. Consequently, there exists a unique function $u_{\varepsilon}\in H^1\left(\Omega^{\varepsilon}\right)$ such that $u_{\varepsilon}^{k}\to u_{\varepsilon}$ as $k\to \infty$. Furthermore, it is straightforward to get that
	\begin{align*} \varepsilon^{\alpha}\mathcal{R}_{\gamma_{k}}\left(u_{\varepsilon}^{k-1}\right)\to\varepsilon^{\alpha}\mathcal{R}\left(u_{\varepsilon}\right)\;\text{strongly in }L^{2}\left(\Omega^{\varepsilon}\right).
	\end{align*}
	Thus, it enables us to confirm the existence and uniqueness of $u_{\varepsilon}$ to the microscopic problem $ (P_{\varepsilon}) $ in $H^1\left(\Omega^{\varepsilon}\right)$. This completes the proof of the theorem.
\end{proof}

\begin{corollary}\label{cor:error1}
	Under the assumptions of Theorem \ref{thm:3.2}, the following rate of convergence holds
	\begin{align}
	\sqrt{\frac{\underline{\gamma}}{\eta + \delta_1 + \underline{\gamma}C_p^{-1}}}\left\Vert \nabla\left(u_{\varepsilon}-u_{\varepsilon}^{k}\right)\right\Vert _{L^{2}\left(\Omega^{\varepsilon}\right)}+\left\Vert u_{\varepsilon}-u_{\varepsilon}^{k}\right\Vert _{L^{2}\left(\Omega^{\varepsilon}\right)}\le\frac{C\overline{\omega}^{k}}{1-\overline{\omega}}.
	\end{align}
\end{corollary}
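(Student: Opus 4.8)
The plan is to obtain the error estimate for $u_\varepsilon - u_\varepsilon^k$ by passing to the limit $r \to \infty$ in the Cauchy estimate \eqref{eq:3.6} already established in the proof of Theorem \ref{thm:3.2}. Since Theorem \ref{thm:3.2} guarantees that $u_\varepsilon^k \to u_\varepsilon$ strongly in $H^1(\Omega^\varepsilon)$ as $k \to \infty$, and in particular $\|\nabla(u_\varepsilon^{k+r} - u_\varepsilon^k)\|_{L^2(\Omega^\varepsilon)} \to \|\nabla(u_\varepsilon - u_\varepsilon^k)\|_{L^2(\Omega^\varepsilon)}$ and $\|u_\varepsilon^{k+r} - u_\varepsilon^k\|_{L^2(\Omega^\varepsilon)} \to \|u_\varepsilon - u_\varepsilon^k\|_{L^2(\Omega^\varepsilon)}$ as $r \to \infty$ (by continuity of the norm together with strong convergence), the left-hand side of \eqref{eq:3.6} converges to the desired quantity.

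For the right-hand side, I would note that $\overline{\omega} \in (0,1)$ by the assumption \eqref{dk1} of Theorem \ref{thm:3.2}, so $\overline{\omega}^r \to 0$ as $r \to \infty$, whence
\begin{align*}
\frac{C\overline{\omega}^k(1-\overline{\omega}^r)}{1-\overline{\omega}} \longrightarrow \frac{C\overline{\omega}^k}{1-\overline{\omega}} \quad \text{as } r\to\infty.
\end{align*}
Taking $r \to \infty$ on both sides of \eqref{eq:3.6} and using that inequalities are preserved under limits yields exactly the claimed bound
\begin{align*}
\sqrt{\frac{\underline{\gamma}}{\eta + \delta_1 + \underline{\gamma}C_p^{-1}}}\left\Vert \nabla\left(u_{\varepsilon}-u_{\varepsilon}^{k}\right)\right\Vert _{L^{2}\left(\Omega^{\varepsilon}\right)}+\left\Vert u_{\varepsilon}-u_{\varepsilon}^{k}\right\Vert _{L^{2}\left(\Omega^{\varepsilon}\right)}\le\frac{C\overline{\omega}^{k}}{1-\overline{\omega}}.
\end{align*}

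This is essentially a one-step corollary, so there is no serious obstacle; the only point requiring a modicum of care is the justification that one may replace $u_\varepsilon^{k+r}$ by $u_\varepsilon$ in the norms on the left — this rests precisely on the strong $H^1$-convergence proven in Theorem \ref{thm:3.2} (not merely weak convergence, which would only give lower semicontinuity and hence the inequality in the correct direction anyway, but strong convergence makes it an equality and is cleaner). Everything else is the elementary observation that a fixed $k$ is held while $r \to \infty$, and that geometric tails vanish. I would present the argument in two or three lines, citing \eqref{eq:3.6} and the convergence $u_\varepsilon^k \to u_\varepsilon$ from Theorem \ref{thm:3.2}.
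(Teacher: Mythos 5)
Your argument is correct and is exactly the route the paper takes: the authors prove this corollary by letting $r\to\infty$ in the Cauchy estimate \eqref{eq:3.6}, using the strong $H^1$-convergence $u_{\varepsilon}^{k+r}\to u_{\varepsilon}$ from Theorem \ref{thm:3.2} and the vanishing of the geometric tail $\overline{\omega}^{r}$. Your write-up merely spells out the limit passage that the paper calls obvious; nothing is missing.
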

\begin{proof}
	The proof of this corollary is obvious by the aid of \eqref{eq:3.6} when taking $r\to \infty$.
\end{proof}

\begin{remark}\label{rem:3.1}
	It is worth mentioning here that if we already know that $u_{\varepsilon}\in H^1(\Omega^{\varepsilon})$ is a unique solution of the microscopic problem $(P_{\varepsilon}) $, then the partial sums of the series $\sum_{j=2}^{k-1}a_{j}\prod_{i=j+1}^{k}b_{i}$ (cf. \eqref{eq:3.5}) just needs to approach 0 as $k\to \infty$ with a certain convergence rate, by considering the energy-like estimate of the difference between the linearized problem and $(P_{\varepsilon}) $. Accordingly, the whole error estimate is controlled by such a rate of that partial sums. This exactly mimics the proof in \cite{Slodika2001} where a minimal polynomial rate $k^{-\omega}$ for $\omega\in (0,1)$, which basically leads to the harmonic progression, is sufficiently taken into account. 
	
	Additionally, it is straightforward to obtain the stability analysis of the scheme $\left\{ u_{\varepsilon}^{k}\right\} _{k\in\mathbb{N}^{*}}$ we construct above, which, in principle, provides concretely $\varepsilon$-independent $\emph{a priori}$ estimates in $H^1(\Omega^{\varepsilon})$. This somewhat enables us to get the existence of a weak solution to the problem $(P_{\varepsilon}) $ in $H^1(\Omega^{\varepsilon})$ by the standard compactness argument, if we are able to derive, at least, the weak convergence of the reaction term in $L^2(\Omega^{\varepsilon})$ after passing to the limit. However, the uniqueness result may not always be achievable by this strategy. In this work, we do not go beyond this matter and will leave it for the future works.
\end{remark}

\section{Settings of the homogenization}
\label{Sec:4}

In the previous section, the rigorous error estimate for the linearization scheme has been obtained in $H^1(\Omega^{\varepsilon})$; cf. Corollary \ref{cor:error1}. In this section, we only exploit the $L^2$ error estimate, although it is well-known from the corrector estimate for the homogenization limit that the microscopic solution of a linear elliptic equation approaches the macroscopic solution of the corresponding homogenized elliptic equation with a rate $\mathcal{O}\left(\varepsilon^{\frac{1}{2}}\right)$ in $H^1(\Omega^{\varepsilon})$; cf. \cite[Corollary 2.29]{Cioranescu1999}. From here on, it is very easy to adapt this result since our approximate problem defined in \eqref{Defapproxweaksol} is all linear at every step $k$.

Our goal here is to introduce the structures of the non-trivial homogenized problem $(P_{0})$ for $(P_{\varepsilon}) $ as well as its cell problems for the sake of computations in Section \ref{Sec:numerical}. When doing so, we remark that $u_{\varepsilon}$ satisfies an $\emph{a priori}$ estimate by means of $\left\Vert u_{\varepsilon}\right\Vert _{H^{1}\left(\Omega^{\varepsilon}\right)} \le C$ established in Theorem \ref{thm:3.1-1} and by taking into account the usual zero extension on $u_{\varepsilon}$ from $H^1(\Omega^{\varepsilon})$ to $H^1(\Omega)$. Accordingly, we only need to take the test function $\varphi = \psi_0 (x) + \varepsilon \psi_{1}\left(x,\frac{x}{\varepsilon}\right)$ for $\psi_0,\psi_{1} \in \mathcal{D}(\Omega;C^{\infty}_{\#}(Y))$ in \eqref{Defapproxweaksol}. Henceforward, the compactness result  allows us to extract subsequences from bounded sequences and to obtain the passage to the two-scale limit. For detailed results concerning the two-scale convergence method for the linear elliptic equation, we refer the interested reader to e.g. \cite{Henning2009,Ray2012} under the theoretical results of the two-scale convergence postulated in \cite{Allaire1992,Nguetseng1989}.

After plugging that typical test function we have
\begin{align*}
& \int_{\Omega^{\varepsilon}}\textbf{A}\left(\frac{x}{\epsilon}\right)\nabla u_{\varepsilon}^{k}\cdot\nabla\left(\psi_{0}\left(x\right)+\varepsilon\psi_{1}\left(x,\frac{x}{\varepsilon}\right)\right)dx+M\int_{\Omega^{\varepsilon}}u_{\varepsilon}^{k}\left(\psi_{0}\left(x\right)+\varepsilon\psi_{1}\left(x,\frac{x}{\varepsilon}\right)\right)dx\\
& =\int_{\Omega^{\varepsilon}}f\left(\psi_{0}\left(x\right)+\varepsilon\psi_{1}\left(x,\frac{x}{\varepsilon}\right)\right)dx+M\int_{\Omega^{\varepsilon}}u_{\varepsilon}^{k-1}\left(\psi_{0}\left(x\right)+\varepsilon\psi_{1}\left(x,\frac{x}{\varepsilon}\right)\right)dx\\
& -\varepsilon^{\alpha}\int_{\Omega^{\varepsilon}}\mathcal{R}_{\gamma_{k}}\left(u_{\varepsilon}^{k-1}\right)\left(\psi_{0}\left(x\right)+\varepsilon\psi_{1}\left(x,\frac{x}{\varepsilon}\right)\right)dx,
\end{align*}
then by passing to the limit $\varepsilon\to 0$, we are led to the following limit equation:
\begin{itemize}
	\item Case $\alpha > 0$:
	\begin{align*} & \int_{\Omega}\int_{Y_{l}}\mathbf{A}(y)\left(\nabla_{x}u_{0}^{k}\left(x\right)+\nabla_{y}u_{1}^{k}\left(x,y\right)\right)\cdot\left(\nabla_{x}\psi_{0}\left(x\right)+\nabla_{y}\psi_{1}\left(x,y\right)\right)dydx\\
	& +\eta\int_{\Omega}\int_{Y_{l}}u_{0}^{k}\left(x\right)\psi_{0}\left(x\right)dydx=\int_{\Omega}\int_{Y_{l}}f\left(x\right)\psi_{0}\left(x\right)dydx\\
	& +\eta\int_{\Omega}\int_{Y_{l}}u_{0}^{k-1}\left(x\right)\psi_{0}\left(x\right)dydx.
	\end{align*}
	\item Case $\alpha = 0$:
	\begin{align*}
	& \int_{\Omega}\int_{Y_{l}}\mathbf{A}(y)\left(\nabla_{x}u_{0}^{k}\left(x\right)+\nabla_{y}u_{1}^{k}\left(x,y\right)\right)\cdot\left(\nabla_{x}\psi_{0}\left(x\right)+\nabla_{y}\psi_{1}\left(x,y\right)\right)dydx\\
	& +\left(\eta+\delta_{1}\right)\int_{\Omega}\int_{Y_{l}}u_{0}^{k}\left(x\right)\psi_{0}\left(x\right)dydx=\int_{\Omega}\int_{Y_{l}}f\left(x\right)\psi_{0}\left(x\right)dydx\\
	& +\left(\eta+\delta_{1}\right)\int_{\Omega}\int_{Y_l}u_{0}^{k-1}\left(x\right)\psi_{0}\left(x\right)dydx+\int_{\Omega}\int_{Y_{l}}\mathcal{R}_{\gamma_{k}}\left(u_{0}^{k-1}\right)\psi_{0}\left(x\right)dydx.
	\end{align*}
\end{itemize}

These two cases are almost the same since we are at the linearization stage. Typically, we choose $\psi_0 = 0$ in both two cases to get
\begin{align}
\begin{cases}
\nabla_{y}\cdot\left(-\mathbf{A}(y)\left(\nabla_{x}u_{0}^{k}\left(x\right)+\nabla_{y}u_{1}^{k}\left(x,y\right)\right)\right)=0 & \text{in }\Omega\times Y_{l},\\
-\mathbf{A}(y)\left(\nabla_{x}u_{0}^{k}\left(x\right)+\nabla_{y}u_{1}^{k}\left(x,y\right)\right)\cdot\text{n}=0 & \text{on }\Omega\times\Gamma,\\
u_{1}^{k}\left(x,y\right)\;\text{is periodic in }y,
\end{cases}
\end{align}
where we have used the integration by parts with respect to $y$. In this way, we use the separation of variables to find
\begin{align}
u_{1}^{k}\left(x,y\right)=\sum_{i=1}^{d}\chi^{k}_{i}\left(y\right)\partial_{x_{i}} u_{0}^{k}\left(x\right).
\end{align}
Here, $\chi^{k}_{i}$ is called as the cell function that solves the following
cell problem:
\begin{align}\label{eq:cell}
\begin{cases}
\nabla_{y}\cdot\left(-\mathbf{A}\left(y\right)\left(\nabla_{y}\chi_{i}^{k}\left(y\right)+e_{i}\right)\right)=0 & \text{in }Y_{l},\\
-\mathbf{A}\left(y\right)\left(\nabla_{y}\chi_{i}^{k}\left(y\right)+e_{i}\right)\cdot\text{n}=0 & \text{on }\Gamma,\\
\chi_{i}^{k}\left(y\right)\;\text{is periodic}.
\end{cases}
\end{align}

It is worth noting that the cell problem for every step $k$ remains unchanged, so that our computations will be less expensive in the sense that we do not need to compute the vector-valued $\chi = \chi^{k}=\left(\chi_{i}^{k}\right)_{1\le i\le d}$ at every $k$. Furthermore, one can prove that such $\chi^{k}\in H^1_{\#}(Y_{l})$. Due to the non-convexity of $Y_{l}$, the regularity of the unique function $\chi$ stops at $H^{1+s}(Y_{l})$ for $s\in (-1/2,1/2)$, no matter how smooth the involved data are; see \cite{Savare1998} for detailed concerns.

Now, choosing $\psi_{1} = 0$ and then applying the integration by parts with respect to $x$, we obtain the equation for $u_{0}^{k}$ as follows:
\begin{align}\label{eq:4.2}
& \nabla\cdot\left(-A^{0}\nabla u_{0}^{k}\right)\\
& +\begin{cases}
\eta\left|Y_{l}\right|u_{0}^{k}=\left|Y_{l}\right|f+\eta\left|Y_{l}\right|u_{0}^{k-1} & \text{if }\alpha>0,\\ \nonumber
\left(\eta+\delta_{1}\right)\left|Y_{l}\right|u_{0}^{k}=\left|Y_{l}\right|f+\left(\eta+\delta_{1}\right)\left|Y_{l}\right|u_{0}^{k-1}-\left|Y_{l}\right|\mathcal{R}_{\gamma_{k}}\left(u_{0}^{k-1}\right) & \text{if }\alpha=0,
\end{cases}
\end{align}
posed in $\Omega$. Here, this equation is endowed with the Dirichlet boundary condition $u_0^k = 0$ at $\partial\Omega$ and $A^0$ is known as the homogenized coefficient given by
\begin{align}\label{eq:homogenized}
a^{0}_{ij}:=\int_{Y_{l}}A\left(y\right)\left(\partial_{y_i}\chi\left(y\right)+\delta_{ij}\right)dy,
\end{align}
where $\delta_{ij}$ stands for the constants of the identity matrix. Additionally, this coefficient satisfies the uniform ellipticity condition by virtue of the well-known Voigt--Reiss inequality; see e.g. \cite{Sviercoski2008}. Thus, proof of the well-posedness of the macroscopic problem in $H^1_0(\Omega)$ is standard.

\begin{remark}
It is worth noting that the linearization scheme of the microscopic system \eqref{Eq(P)} plays two roles in this paper. As the primary goal, it allows us to prove the well-posedness of the microscopic system for any $\varepsilon> 0$. The second purpose of this scheme is to obtain the corresponding approximate homogenized system \eqref{eq:4.2}. To this end, this finding enables us to compute the reliable homogenized solution. This is a new story of the linearization scheme developed in \cite{Slodika2001} for semi-linear problems at only the macro-scale, where the author proved its strong convergence by a priori knowledge of the well-posedness of the original problem. Moreover, this is a novel contribution of the recent result \cite{khoa2018pore}, where the authors only studied the global Lipschitz reaction and the asymptotic analysis of \eqref{Eq(P)}.
\end{remark}

\begin{remark}
	As we now work on the linearized elliptic problem at the macro-scale, we below follow the same strategy of Theorems \ref{thm:3.1}, \ref{thm:3.2} and those of \cite{Slodika2001} to guarantee the stability (in some suitable norms) of the scheme in the macroscopic scaling and with respect to the number of iteration $k$.
\end{remark}

\begin{theorem}\label{thm:4.2}
	Under the assumptions of Theorem \ref{thm:3.2} and suppose that
	\begin{align}\label{dk2}
	\omega + \sqrt{\frac{(\eta + \delta_1)\left|Y_{l}\right|}{\left|A^{0}\right|c_p^{-1}+(\eta+\delta_1)\left|Y_{l}\right|}} < 1,
	\end{align}
	where $c_p>0$ is the standard Poincar\'e\footnote{For any $u\in H_{0}^{1}\left(\Omega\right)$, it holds $\left\Vert u\right\Vert^{2} _{L^{2}\left(\Omega\right)}\le c_{p}\left\Vert \nabla u\right\Vert^{2} _{L^{2}\left(\Omega\right)}$.} constant. Then, the iterative sequence $\left\{u_{0}^{k}\right\}_{k\in\mathbb{N}^{*}}$ is Cauchy in $H^1_0(\Omega)$.
\end{theorem}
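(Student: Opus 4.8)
The plan is to transcribe, at the macro-scale, the energy-contraction argument behind Theorems~\ref{thm:3.1} and~\ref{thm:3.2}, replacing the perforated-domain ingredients by their homogenized counterparts. I set $w_{0}^{k}:=u_{0}^{k}-u_{0}^{k-1}$ for $k\ge 2$ and subtract two consecutive weak formulations of~\eqref{eq:4.2}. In the branch $\alpha>0$ this gives, for every $\varphi\in H_{0}^{1}(\Omega)$, the purely linear identity
\begin{align*}
\int_{\Omega}A^{0}\nabla w_{0}^{k}\cdot\nabla\varphi\,dx+\eta\left|Y_{l}\right|\left\langle w_{0}^{k},\varphi\right\rangle_{L^{2}(\Omega)}=\eta\left|Y_{l}\right|\left\langle w_{0}^{k-1},\varphi\right\rangle_{L^{2}(\Omega)},
\end{align*}
with no regularization term; in the branch $\alpha=0$ one additionally picks up $-\left|Y_{l}\right|\left\langle\mathcal{R}_{\gamma_{k}}(u_{0}^{k-1})-\mathcal{R}_{\gamma_{k-1}}(u_{0}^{k-2}),\varphi\right\rangle_{L^{2}(\Omega)}$, which I would handle exactly as in the proof of Theorem~\ref{thm:3.1}: introduce $g_{\gamma_{k}}(t):=\left|Y_{l}\right|\big(\mathcal{R}_{\gamma_{k}}(t)-(\eta+\delta_{1})t\big)$, rewrite the reaction increment as $\left|Y_{l}\right|\big(\mathcal{R}_{\gamma_{k-1}}(u_{0}^{k-2})-\mathcal{R}_{\gamma_{k}}(u_{0}^{k-2})\big)+\big(g_{\gamma_{k}}(u_{0}^{k-2})-g_{\gamma_{k}}(u_{0}^{k-1})\big)$, and exploit $0<\gamma_{k}\delta_{0}\le\mathcal{R}_{\gamma_{k}}'\le\delta_{1}$ --- so that $\left|g_{\gamma_{k}}'\right|\le\left|Y_{l}\right|(\eta+\delta_{1}-\gamma_{k}\delta_{0})$ --- together with the regularization bound $\left|\mathcal{R}_{\gamma_{k-1}}-\mathcal{R}_{\gamma_{k}}\right|\le C(\gamma_{k-1}^{\sigma}+\gamma_{k}^{\sigma})$.

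Next I would take $\varphi=w_{0}^{k}$, invoke the uniform ellipticity of the homogenized matrix $A^{0}$ (guaranteed by the Voigt--Reiss inequality, its ellipticity lower bound denoted $\left|A^{0}\right|$ in accordance with~\eqref{dk2}), and estimate the right-hand side by the Young inequality and the monotonicity of $\gamma_{k}$, mimicking the passage to~\eqref{eq:3.4}. For $\alpha=0$ this should produce
\begin{align*}
2\left|A^{0}\right|\left\Vert\nabla w_{0}^{k}\right\Vert_{L^{2}(\Omega)}^{2}+(\eta+\delta_{1})\left|Y_{l}\right|\left\Vert w_{0}^{k}\right\Vert_{L^{2}(\Omega)}^{2}\le\frac{C\gamma_{k-1}^{2\sigma}}{\gamma_{k}}+\big(\eta+\delta_{1}-\gamma_{k}\delta_{0}\big)\left|Y_{l}\right|\left\Vert w_{0}^{k-1}\right\Vert_{L^{2}(\Omega)}^{2},
\end{align*}
and, for $\alpha>0$, the sharper inequality with $\eta$ in place of $\eta+\delta_{1}$ and no $\gamma_{k-1}^{2\sigma}$ term. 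Applying the Poincar\'e inequality on $H_{0}^{1}(\Omega)$ with constant $c_{p}$ and rearranging (the algebra carrying~\eqref{eq:3.4} to~\eqref{eq:3.4-1}) then yields a recursion of the type handled by Lemma~\ref{lem:3.1}, namely $p_{k}+q_{k}\le a_{k}+b_{k}q_{k-1}$ with $q_{k}=\left\Vert w_{0}^{k}\right\Vert_{L^{2}(\Omega)}^{2}$, $a_{k}=C\gamma_{k-1}^{2\sigma}/\gamma_{k}$, and the $k$-independent contraction factor $b_{k}=\frac{(\eta+\delta_{1})\left|Y_{l}\right|}{\left|A^{0}\right|c_{p}^{-1}+(\eta+\delta_{1})\left|Y_{l}\right|}\in(0,1)$ (respectively with $\eta$ instead of $\eta+\delta_{1}$ and $a_{k}\equiv 0$ when $\alpha>0$).

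From here the reasoning is verbatim that of Theorem~\ref{thm:3.2}: unroll the recursion by Lemma~\ref{lem:3.1}, take square roots, and use the hypothesis $\gamma_{k-1}^{\sigma}/\gamma_{k}\le C\omega^{k}$ together with the binomial identity to bound the same weighted combination of $\left\Vert\nabla w_{0}^{k}\right\Vert_{L^{2}(\Omega)}$ and $\left\Vert w_{0}^{k}\right\Vert_{L^{2}(\Omega)}$ appearing in~\eqref{eq:3.6} by $C\big(\omega+\sqrt{b_{k}}\big)^{k-1}$; condition~\eqref{dk2} makes $\omega+\sqrt{b_{k}}<1$, so this is a convergent geometric bound. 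Summing over a block $k,k+1,\dots,k+r-1$ via the triangle inequality, exactly as in~\eqref{eq:3.6}, shows that $\{u_{0}^{k}\}$ is Cauchy in $H_{0}^{1}(\Omega)$; since $\Omega$ is a fixed domain, no uniformity in $\varepsilon$ is even at stake here.

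The argument is not conceptually new --- it shadows Section~\ref{Sec:3} step for step --- so the main obstacle is bookkeeping. One has to check that the contraction factor is genuinely built from the \emph{ellipticity constant of the homogenized matrix} $A^{0}$ and the Poincar\'e constant $c_{p}$ of $\Omega$, rather than from $\underline{\gamma}$ and $C_{p}$ of $\Omega^{\varepsilon}$ --- which is precisely why~\eqref{dk2} replaces~\eqref{dk1} --- and that the cell-average weights $\left|Y_{l}\right|$ introduced by the two-scale limit in~\eqref{eq:4.2} are carried consistently through the Young-inequality splitting, so that the $\delta_{0}\gamma_{k}$ gain still absorbs the regularization residual. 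The only structural difference from the micro-scale proof is that the reaction term survives solely in the $\alpha=0$ branch, so the $g_{\gamma_{k}}$ decomposition is needed only there, whereas the $\alpha>0$ branch collapses to a plain linear geometric contraction with no regularization error.
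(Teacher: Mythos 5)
Your proposal is correct and follows essentially the same route as the paper's own proof: the same case split in $\alpha$, the same decomposition via $\bar g_{\gamma_k}(t)=\mathcal{R}_{\gamma_k}(t)-(\eta+\delta_1)t$, the same Young/Poincar\'e estimates feeding Lemma \ref{lem:3.1}, and the same geometric-contraction conclusion under \eqref{dk2}. The only differences are cosmetic bookkeeping (where the factor $\left|Y_l\right|$ and the factor of $2$ on the gradient term are placed), which do not affect the argument.
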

\begin{proof}
	It is trivial to prove that the functions $u_{0}^{k}$ are all in $H_0^1(\Omega)$. Observe that \eqref{eq:4.2} is structured by the cases of $\alpha$, we thus also divide the proof here into two parts. In the first part, we treat the following equation:
	\begin{align}
	\nabla\cdot\left(-A^{0}\nabla u_{0}^{k}\right)+\eta\left|Y_{l}\right|u_{0}^{k}=\left|Y_{l}\right|f+\eta\left|Y_{l}\right|u_{0}^{k-1}.
	\end{align}
	Recall that the problem under consideration is associated with the zero Dirichlet boundary condition.
	
	We now use the test function $\bar{\varphi}\in H^1_0(\Omega)$ to arrive at the following variational formulation:
	\begin{align}
	\bar{a}\left(u_{0}^{k},\bar{\varphi}\right)+\eta\left|Y_{l}\right|\left\langle u_{0}^{k},\bar{\varphi}\right\rangle _{L^{2}\left(\Omega\right)}=\left|Y_{l}\right|\left\langle f,\bar{\varphi}\right\rangle _{L^{2}\left(\Omega\right)}+\eta\left|Y_{l}\right|\left\langle u_{0}^{k-1},\bar{\varphi}\right\rangle _{L^{2}\left(\Omega\right)},
	\end{align}
	where $\bar{a}:H_0^1(\Omega)\times H_0^1(\Omega)\to \mathbb{R}$ is a bilinear mapping given by
	\begin{align}
	\bar{a}\left(u,\bar{\varphi}\right):=A^{0}\int_{\Omega}\nabla u\cdot\nabla\bar{\varphi}dx.
	\end{align}
	
	Then, it is straightforward to compute the difference equation by putting $v_{0}^{k} = u_0^k - u_0^{k-1}$. This function essentially satisfies the following equation:
	\begin{align}
	\bar{a}\left(v_{0}^{k},\bar{\varphi}\right)+\eta\left|Y_{l}\right|\left\langle v_{0}^{k},\bar{\varphi}\right\rangle _{L^{2}\left(\Omega\right)}=\eta\left|Y_{l}\right|\left\langle v_{0}^{k-1},\bar{\varphi}\right\rangle _{L^{2}\left(\Omega\right)}\; \text{for }\bar{\varphi}\in H^1_0(\Omega),
	\end{align}
	and then, by taking $\bar{\varphi}=v_0^k$ it leads to
	\begin{align}\label{eq:4.4}
	\left|A^{0}\right|\left\Vert \nabla v_{0}^{k}\right\Vert _{L^{2}\left(\Omega\right)}^{2}+\eta\left|Y_{l}\right|\left\Vert v_{0}^{k}\right\Vert _{L^{2}\left(\Omega\right)}^{2}  &=\eta\left|Y_{l}\right|\left\langle v_{0}^{k-1},v_{0}^{k}\right\rangle_{L^2(\Omega)} \nonumber\\
	&\le\frac{\eta\left|Y_{l}\right|}{2}\left(\left\Vert v_{0}^{k-1}\right\Vert _{L^{2}\left(\Omega\right)}^{2}+\left\Vert v_{0}^{k}\right\Vert _{L^{2}\left(\Omega\right)}^{2}\right).
	\end{align}
	
	Due to the standard Poincar\'e inequality, we have
	\begin{align}
	\frac{\left|A^{0}\right|}{\left|A^{0}\right|c_p^{-1}+\eta\left|Y_{l}\right|}\left\Vert \nabla v_{0}^{k}\right\Vert _{L^{2}\left(\Omega\right)}^{2}+\left\Vert v_{0}^{k}\right\Vert _{L^{2}\left(\Omega\right)}^{2}\le\frac{\eta\left|Y_{l}\right|}{\left|A^{0}\right|c_p^{-1}+\eta\left|Y_{l}\right|}\left\Vert v_{0}^{k-1}\right\Vert _{L^{2}\left(\Omega\right)}^{2},
	\end{align}
	and by using Lemma \ref{lem:3.1}, we estimate that
	\begin{align}
	\frac{\left|A^{0}\right|}{\left|A^{0}\right| c_p^{-1}+\eta\left|Y_{l}\right|}\left\Vert \nabla v_{0}^{k}\right\Vert _{L^{2}\left(\Omega\right)}^{2}+\left\Vert v_{0}^{k}\right\Vert _{L^{2}\left(\Omega\right)}^{2}\le\left(\frac{\eta\left|Y_{l}\right|}{\left|A^{0}\right|c_p^{-1}+\eta\left|Y_{l}\right|}\right)^{k-1}\left\Vert u_{0}^{1}\right\Vert _{L^{2}\left(\Omega\right)}^{2}.
	\end{align}
	Similar to proof of Theorem \ref{thm:3.2}, we can prove that $\left\{u_{0}^{k}\right\}_{k\in\mathbb{N}^{*}}$ is Cauchy in $H^1_0(\Omega)$.
	
	Next, we consider the following equation:
	\begin{align}
	\nabla\cdot\left(-A^{0}\nabla u_{0}^{k}\right)+\left(\eta+\delta_{1}\right)\left|Y_{l}\right|u_{0}^{k}=\left|Y_{l}\right|f+\left(\eta+\delta_{1}\right)\left|Y_{l}\right|u_{0}^{k-1}-\left|Y_{l}\right|\mathcal{R}_{\gamma_{k}}\left(u_{0}^{k-1}\right).
	\end{align}
	
	We proceed as above by taking into account the variational formulation of the difference equation. By so doing, \eqref{eq:4.4} becomes
	\begin{align*}
	&\left|A^{0}\right|\left\Vert \nabla v_{0}^{k}\right\Vert _{L^{2}\left(\Omega\right)}^{2}  +\left(\eta+\delta_{1}\right)\left|Y_{l}\right|\left\Vert v_{0}^{k}\right\Vert _{L^{2}\left(\Omega\right)}^{2}\\
	& =\left|Y_{l}\right|\left\langle \mathcal{R}_{\gamma_{k-1}}\left(u_{0}^{k-2}\right)-\mathcal{R}_{\gamma_{k}}\left(u_{0}^{k-2}\right),v_{0}^{k}\right\rangle _{L^{2}\left(\Omega\right)} +\left|Y_{l}\right|\left\langle \bar{g}_{\gamma_{k}}\left(u_{0}^{k-2}\right)-\bar{g}_{\gamma_{k}}\left(u_{0}^{k-1}\right),v_{0}^{k}\right\rangle _{L^{2}\left(\Omega\right)},
	\end{align*}
	where we have denoted by  $\bar{g}_{\gamma_k}(t)=\mathcal{R}_{\gamma_k}(t)-\left(\eta + \delta_1\right)t$.
	
	Since $\left|\bar{g}_{\gamma_k}'\right| \le \eta + \delta_1 -\gamma_k\delta_0$ and $\left|\mathcal{R}_{\gamma_{k-1}}-\mathcal{R}_{\gamma_{k}}\right|\le C\gamma_{k-1}^{\sigma}$, we then apply the Young inequality to obtain
	\begin{align*}
	\left|A^{0}\right|\left\Vert \nabla v_{0}^{k}\right\Vert _{L^{2}\left(\Omega\right)}^{2} & +\left(\eta+\delta_{1}\right)\left|Y_{l}\right|\left\Vert v_{0}^{k}\right\Vert _{L^{2}\left(\Omega\right)}^{2}\\
	& \le\frac{C\gamma_{k-1}^{2\sigma}}{\gamma_{k}}+\left(\frac{\eta+\delta_{1}}{2}\right)\left|Y_{l}\right|\left(\left\Vert v_{0}^{k-1}\right\Vert _{L^{2}\left(\Omega\right)}^{2}+\left\Vert v_{0}^{k}\right\Vert _{L^{2}\left(\Omega\right)}^{2}\right),
	\end{align*}
	and after some rearrangements and applying the standard Poincar\'e inequality, we arrive at
	\begin{align*}
	\frac{\left|A^{0}\right|}{\left(\eta+\delta_{1}\right)\left|Y_{l}\right|+\left|A^{0}\right|c_{p}^{-1}}\left\Vert \nabla v_{0}^{k}\right\Vert _{L^{2}\left(\Omega\right)}^{2} & +\left\Vert v_{0}^{k}\right\Vert _{L^{2}\left(\Omega\right)}^{2}\\
	& \le\frac{C\gamma_{k-1}^{2\sigma}}{\gamma_{k}}+\frac{\left(\eta+\delta_{1}\right)\left|Y_{l}\right|}{\left(\eta+\delta_{1}\right)\left|Y_{l}\right|+\left|A^{0}\right|c_{p}^{-1}}\left\Vert v_{0}^{k-1}\right\Vert _{L^{2}\left(\Omega\right)}^{2}.
	\end{align*}
	
	Henceforward, with the aid of Lemma \ref{lem:3.1} we have the same estimate as \eqref{eq:3.4-1} and by the choice of $\gamma_{k}$ in Theorem \ref{thm:3.2}, we consequently prove that $\left\{u_{0}^{k}\right\}_{k\in\mathbb{N}^{*}}$ is Cauchy in $H^1_0(\Omega)$. This completes the proof of the theorem.
\end{proof}

As argued in Remark \ref{rem:3.1}, the geometric progression is required to prove the existence and uniqueness of the microscopic problem $\left(P_{\varepsilon}\right)$ in $H^1(\Omega^{\varepsilon})$, and to avoid the case $\alpha>0$ where the error bound is arbitrarily slow. Since we are now dealing with the macroscopic framework, we can get a better ``stability analysis". This argument is shown in the following theorem as a stability analysis of the macroscopic scheme in $L^{\infty}(\Omega)$. Additionally, this can be applied to Theorem \ref{thm:4.2} where the assumption \eqref{dk2} is no longer necessary by not using the Poincar\'e inequality.

\begin{theorem}
	Consider the case $\alpha = 0$. Suppose that the internal source $f$ and the regularization  $\mathcal{R}_{\gamma_{k}}$ are smooth. Under the assumptions of Theorem \ref{thm:4.2}, if we can choose
	\begin{align}
	\gamma_{k}= \frac{C}{k+1},
	\end{align}
	the iterative sequence $\left\{u_{0}^{k}\right\}_{k\in\mathbb{N}^{*}}$ is  stable in the sense that
	\begin{align*}
		\left\Vert u_{0}^{k}-u_{0}^{k-1}\right\Vert _{L^{\infty}\left(\Omega\right)}\le \frac{C}{(k+1)^{\sigma -1}}, \quad\text{with }\sigma> 1.
	\end{align*}
\end{theorem}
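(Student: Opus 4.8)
The plan is to obtain an $L^{\infty}$ contraction for the difference functions $v_0^k = u_0^k - u_0^{k-1}$ at the macro-scale in the case $\alpha = 0$, mimicking the energy estimates of Theorems \ref{thm:3.1} and \ref{thm:4.2} but replacing the Poincar\'e-based absorption of the gradient term by an $L^{\infty}$ elliptic regularity (maximum-principle) estimate. Since $f$ and $\mathcal{R}_{\gamma_k}$ are assumed smooth and $A^0$ is a constant uniformly elliptic matrix, the macroscopic equation for $v_0^k$,
\begin{align}\label{eq:vk-macro}
\nabla\cdot\left(-A^{0}\nabla v_{0}^{k}\right)+\left(\eta+\delta_{1}\right)\left|Y_{l}\right|v_{0}^{k}
=\left|Y_{l}\right|\left(\bar g_{\gamma_k}(u_0^{k-2})-\bar g_{\gamma_k}(u_0^{k-1})\right)
+\left|Y_{l}\right|\left(\mathcal{R}_{\gamma_{k-1}}(u_0^{k-2})-\mathcal{R}_{\gamma_{k}}(u_0^{k-2})\right),
\end{align}
with zero Dirichlet data, has a classical solution, and one may apply the weak maximum principle (or the $L^\infty$ bound for the operator $-\nabla\cdot(A^0\nabla\cdot)+(\eta+\delta_1)|Y_l|\,\mathrm{id}$) directly.

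First I would write the right-hand side of \eqref{eq:vk-macro} as the sum of two contributions. For the second contribution, the regularization estimate $\left|\mathcal{R}_{\gamma_{k-1}}-\mathcal{R}_{\gamma_{k}}\right|\le C\gamma_{k-1}^{\sigma}$ gives an $L^{\infty}(\Omega)$ bound of order $\gamma_{k-1}^{\sigma}=C(k)^{-\sigma}$. For the first contribution, since $\bar g_{\gamma_k}$ is Lipschitz with $\left|\bar g_{\gamma_k}'\right|\le \eta+\delta_1-\gamma_k\delta_0 < \eta+\delta_1$, one has pointwise $\left|\bar g_{\gamma_k}(u_0^{k-2})-\bar g_{\gamma_k}(u_0^{k-1})\right|\le (\eta+\delta_1)\left|v_0^{k-1}\right|$. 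Feeding this into the maximum-principle estimate for \eqref{eq:vk-macro} yields a recursion of the form
\begin{align}\label{eq:rec-linfty}
\left\Vert v_0^{k}\right\Vert_{L^{\infty}(\Omega)} \le \theta \left\Vert v_0^{k-1}\right\Vert_{L^{\infty}(\Omega)} + \frac{C}{(k+1)^{\sigma}},
\end{align}
where $\theta = (\eta+\delta_1)\left|Y_l\right|/\big((\eta+\delta_1)\left|Y_l\right| + \lambda\big)\in(0,1)$, with $\lambda>0$ the zeroth-order reaction constant $(\eta+\delta_1)\left|Y_l\right|$ absorbed against the full operator — the key point being that the elliptic part contributes a strictly positive amount so that the effective contraction factor for the $L^\infty$ norm is $<1$ without needing \eqref{dk2}. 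Iterating \eqref{eq:rec-linfty} from $k$ down to $1$ and using $u_0^0 = 0$ gives
\begin{align*}
\left\Vert v_0^{k}\right\Vert_{L^{\infty}(\Omega)} \le C\sum_{j=1}^{k}\theta^{k-j}\frac{1}{(j+1)^{\sigma}} + \theta^{k-1}\left\Vert u_0^1\right\Vert_{L^{\infty}(\Omega)}.
\end{align*}
A standard lemma on convolutions of a geometric sequence with a polynomially decaying one (split the sum at $j\approx k/2$: for $j\le k/2$ the factor $\theta^{k-j}$ is exponentially small, for $j> k/2$ the factor $(j+1)^{-\sigma}$ is $\le C(k+1)^{-\sigma}\le C(k+1)^{-(\sigma-1)}$) shows the sum is $\mathcal{O}\big((k+1)^{-\sigma}\big)$, and since $\theta^{k-1}$ decays faster than any polynomial, the claimed bound $\left\Vert u_0^k - u_0^{k-1}\right\Vert_{L^{\infty}(\Omega)}\le C(k+1)^{-(\sigma-1)}$ follows (in fact with the slightly better exponent $\sigma$, but $\sigma-1$ is what is stated and suffices).

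The main obstacle I expect is justifying the clean $L^{\infty}$ a priori estimate for the constant-coefficient elliptic equation \eqref{eq:vk-macro} with the reaction term on the left and the precise dependence on $\theta<1$: one must be careful that the elliptic term genuinely improves the naive contraction factor. This is exactly where smoothness of the data and the classical maximum principle for $-\nabla\cdot(A^0\nabla\cdot)+c$ with $c>0$ come in — testing against $(v_0^k-m)^+$ for the relevant constant $m$, or invoking a De Giorgi–Stampacchia $L^\infty$ bound, gives $\left\Vert v_0^k\right\Vert_{L^\infty}\le \theta\left\Vert v_0^{k-1}\right\Vert_{L^\infty} + C\left\Vert \text{(regularization remainder)}\right\Vert_{L^\infty}$ with the stated $\theta$. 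A secondary, purely bookkeeping obstacle is the convolution sum estimate, but that is routine. Note that, as remarked after Theorem \ref{thm:4.2}, once this $L^\infty$ contraction is in hand the Cauchy property in $L^\infty(\Omega)$ — and hence the removal of assumption \eqref{dk2} in Theorem \ref{thm:4.2} — is immediate by summing a geometric-type series, exactly as in the proof of Theorem \ref{thm:3.2}.
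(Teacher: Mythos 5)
Your overall strategy (a weak maximum principle / truncation argument for the difference equation at the macro-scale, bypassing the Poincar\'e inequality and hence assumption \eqref{dk2}) is the same as the paper's, which tests against $\left(v_0^k+D_k\right)^{-}$ and $\left(v_0^k-D_k\right)^{+}$. However, your key quantitative step contains a genuine gap: the claimed recursion $\left\Vert v_0^k\right\Vert_{L^\infty}\le\theta\left\Vert v_0^{k-1}\right\Vert_{L^\infty}+C(k+1)^{-\sigma}$ with a \emph{$k$-independent} $\theta=(\eta+\delta_1)\left|Y_l\right|/\bigl((\eta+\delta_1)\left|Y_l\right|+\lambda\bigr)<1$ is not available. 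In the $L^\infty$ setting there is no analogue of the Poincar\'e inequality: testing against the truncation only shows that the elliptic term is \emph{nonnegative} and can be discarded; it does not contribute a strictly positive multiple $\lambda\left\Vert v_0^k\right\Vert_{L^\infty}$ to the left-hand side. (This is precisely the difference from the $L^2$ argument of Theorem \ref{thm:4.2}, where $\left|A^0\right|c_p^{-1}$ enters the denominator.) The only source of contraction is the Lipschitz bound $\left|\bar g_{\gamma_k}'\right|\le\eta+\delta_1-\gamma_k\delta_0$, which after dividing by the zeroth-order coefficient $(\eta+\delta_1)\left|Y_l\right|$ yields the factor
\begin{align*}
\bar b_k=1-\frac{\gamma_k\delta_0}{\eta+\delta_1}\longrightarrow 1\quad\text{as }k\to\infty,
\end{align*}
since $\gamma_k=C/(k+1)\to 0$. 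With these deteriorating factors the product $\prod_{i=j+1}^{k}\bar b_i$ decays only polynomially, roughly like $\bigl((j+2)/(k+1)\bigr)^{\delta_0/(\eta+\delta_1)}$, not geometrically, so your split-at-$k/2$ convolution argument (which relies on $\theta^{k-j}$ being exponentially small for $j\le k/2$) breaks down.

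The paper's proof handles exactly this point: after Lemma \ref{lem:3.1} it bounds $\sum_{j=2}^{k-1}\bar a_j\prod_{i=j+1}^{k}\bar b_i$ using $1+x\le\exp(x)$ and a sum--integral comparison, obtaining $C(k+2)^{-\sigma+1}$ under $\sigma>\delta_0/(\eta+\delta_1)$ (automatic since $\sigma>1$). This is also why the stated exponent is $\sigma-1$ and not the $\sigma$ you claim as a bonus: the loss of one power is forced by the vanishing contraction gap $\gamma_k\delta_0$. To repair your argument you should keep the factor $1-\gamma_k\delta_0/(\eta+\delta_1)$ explicitly, apply Lemma \ref{lem:3.1}, and then carry out the exponential/logarithmic estimate of the product as in the paper; the rest of your maximum-principle setup (smoothness giving $u_0^k\in L^\infty(\Omega)$, the bound $\left|\mathcal{R}_{\gamma_{k-1}}-\mathcal{R}_{\gamma_k}\right|\le C\gamma_{k-1}^{\sigma}$, and the final geometric-series remark) is sound.
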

\begin{proof}
	In this proof, we also follow the same vein as proof of Theorem \ref{thm:4.2} where we consider two typical structures of the limit equation \eqref{eq:4.2} and investigate their corresponding difference equation by the standard variational setting. For brevity, we do not mention again those equations, but the governing difference equations. Due to the linear problem as well as the smoothness of $f$ and $\mathcal{R}_{\gamma_{k}}$, it is also trivial to prove that the functions $u_0^k$ are in $L^{\infty}(\Omega)$, cf. \cite{Agmon1959}. In this regard, we recall the difference equation
	\begin{align}\label{eq:4.6}
	\bar{a}\left(v_{0}^{k},\bar{\varphi}\right)+\left(\eta+\delta_{1}\right)\left|Y_{l}\right|\left\langle v_{0}^{k},\bar{\varphi}\right\rangle _{L^{2}\left(\Omega\right)} & =\left|Y_{l}\right|\left\langle \mathcal{R}_{\gamma_{k-1}}\left(u_{0}^{k-2}\right)-\mathcal{R}_{\gamma_{k}}\left(u_{0}^{k-2}\right),\bar{\varphi}\right\rangle _{L^{2}\left(\Omega\right)}\\ \nonumber
	& +\left|Y_{l}\right|\left\langle \bar{g}_{\gamma_{k}}\left(u_{0}^{k-2}\right)-\bar{g}_{\gamma_{k}}\left(u_{0}^{k-1}\right),\bar{\varphi}\right\rangle _{L^{2}\left(\Omega\right)},
	\end{align} 
	where the test function $\bar{\varphi}\in H_0^1(\Omega)\cap L^{\infty}(\Omega)$ is now taken into account.
	
	Now we set
	\begin{align}
	D_{k} := \frac{1}{\eta+\delta_{1}}\left\Vert \left(\eta+\delta_{1}\right)v_{0}^{k-1}+\mathcal{R}_{\gamma_{k-1}}\left(u_{0}^{k-2}\right)-\mathcal{R}_{\gamma_{k}}\left(u_{0}^{k-1}\right)\right\Vert _{L^{\infty}\left(\Omega\right)}>0,
	\end{align}
	and put
	\begin{align}
	W_{k}^{1}:=\left\{ x\in\Omega:v_0^{k}+ D_{k} < 0\right\}.
	\end{align}
	Then we assume that $\left|W_{k}^{1}\right|>0$. By choosing $\bar{\varphi} = \left(v_0^{k}+ D_{k}\right)^{-}$ in \eqref{eq:4.6} where $f^{-} := \min\left\{f,0\right\}$, we project \eqref{eq:4.6} from $\Omega$ to the set $W_{k}^{1}$. Thus, below we will accompany the notation $W_{k}^{1}$ to indicate the fact that we are working in that set, although the essential integral has to be posed in $\Omega$. In fact, \eqref{eq:4.6} now becomes
	\begin{align}\label{eq:4.7}
	&\bar{a}_{W_k^1}\left(v_{0}^{k},\bar{\varphi}\right)
	+\left(\eta+\delta_{1}\right)\left|Y_{l}\right|\left\langle v_{0}^{k},\bar{\varphi}\right\rangle _{L^{2}\left(W_k^1\right)} \\ \nonumber & =\left|Y_{l}\right|\left\langle \mathcal{R}_{\gamma_{k-1}}\left(u_{0}^{k-2}\right)-\mathcal{R}_{\gamma_{k}}\left(u_{0}^{k-2}\right),\bar{\varphi}\right\rangle _{L^{2}\left(W_k^1\right)} +\left|Y_{l}\right|\left\langle \bar{g}_{\gamma_{k}}\left(u_{0}^{k-2}\right)-\bar{g}_{\gamma_{k}}\left(u_{0}^{k-1}\right),\bar{\varphi}\right\rangle _{L^{2}\left(W_k^1\right)}.
	\end{align} 
	
	At this stage, we see that the bilinear form $\bar{a}_{W_k^1}\left(v_0^k,\bar{\varphi}\right)$ is non-negative certainly due to the presence of the gradient. Furthermore, it holds for a.e. $x\in W_k^1$ that
	\begin{align}
	v_{0}^{k}-v_{0}^{k-1}-\frac{1}{\eta+\delta_{1}}\left(\mathcal{R}_{\gamma_{k-1}}\left(u_{0}^{k-2}\right)-\mathcal{R}_{\gamma_{k}}\left(u_{0}^{k-1}\right)\right)\le v_{0}^{k}+D_{k}<0,
	\end{align}
	which implies
	\begin{align}
	\left(\eta+\delta_{1}\right)\left|Y_{l}\right|\left\langle v_{0}^{k}-v_{0}^{k-1}-\frac{\mathcal{R}_{\gamma_{k-1}}\left(u_{0}^{k-2}\right)-\mathcal{R}_{\gamma_{k}}\left(u_{0}^{k-1}\right)}{\eta+\delta_{1}},\bar{\varphi}\right\rangle _{L^{2}\left(W_{k}^{1}\right)}>0.
	\end{align}
	This essentially contradicts the equality \eqref{eq:4.7} and thus the assumption $\left|W_{k}^{1}\right|>0$ does not hold. Equivalently, it means that for a.e. $x\in\Omega$, it holds $u_{0}^{k}-u_{0}^{k-1}\ge-D_{k}$.
	
	In the same vein, we can prove that $u_{0}^{k}-u_{0}^{k-1}\le D_{k}$ by setting
	\begin{align}
	W_{k}^{2}:=\left\{ x\in\Omega:v_0^{k} - D_{k} > 0\right\},
	\end{align}
	and employing the test function $\bar{\varphi}=\left(v_0^k - D_k\right)^{+}$ where $f^{+}:= \max\left\{f,0\right\}$. For brevity, we omit the details and leave it to the reader.
	
	Henceforward, we obtain
	\begin{align*}
	 \left\Vert u_{0}^{k}-u_{0}^{k-1}\right\Vert _{L^{\infty}(\Omega)}\le D_{k} \le\frac{1}{\eta+\delta_{1}}\left\Vert \mathcal{R}_{\gamma_{k-1}}\left(u_{0}^{k-2}\right)-\mathcal{R}_{\gamma_{k}}\left(u_{0}^{k-2}\right)+\bar{g}_{\gamma_{k}}\left(u_{0}^{k-2}\right)-\right.\\
	 \left.\bar{g}_{\gamma_{k}}\left(u_{0}^{k-1}\right)\right\Vert _{L^{\infty}\left(\Omega\right)}.
	\end{align*}
	In view of the fact that $\left|\bar{g}_{\gamma_k}'\right| \le \eta + \delta_1 -\gamma_k\delta_0$ and $\left|\mathcal{R}_{\gamma_{k-1}}-\mathcal{R}_{\gamma_{k}}\right|\le C\gamma_{k-1}^{\sigma}$, it reveals
	\begin{align}
	\left\Vert u_{0}^{k}-u_{0}^{k-1}\right\Vert _{L^{\infty}\left(\Omega\right)}\le\frac{\gamma_{k-1}^{\sigma}}{\eta+\delta_{1}}+\left(1-\frac{\gamma_{k}\delta_{0}}{\eta+\delta_{1}}\right)\left\Vert u_{0}^{k-1}-u_{0}^{k-2}\right\Vert _{L^{\infty}\left(\Omega\right)}.
	\end{align}
	
	Thanks to Lemma \ref{lem:3.1}, one then deduces
	\begin{align}\label{eq:4.8}
	\left\Vert u_{0}^{k}-u_{0}^{k-1}\right\Vert _{L^{\infty}\left(\Omega\right)}\le\bar{a}_{k}+\sum_{j=2}^{k-1}\bar{a}_{j}\prod_{i=j+1}^{k}\bar{b}_{i}+\left\Vert u_{0}^{1}\right\Vert _{L^{\infty}\left(\Omega\right)}\prod_{i=2}^{k}\bar{b}_{i},
	\end{align}
	where
	\begin{align}
	\bar{a}_{k}:=C\gamma_{k-1}^{\sigma},\;\bar{b}_{k}:=1-\frac{\gamma_{k}\delta_{0}}{\eta+\delta_{1}}.
	\end{align}
	
	It suffices to take into account the second term on the right-hand side of \eqref{eq:4.8}. Observe that we can bound it from above by using the standard inequality $1+x \le \text{exp}(x)$ for $x\in \mathbb{R}$. Indeed, it is aided by the sum and integral inequality\footnote{$\int_{j}^{k}\gamma\left(x\right)dx\le\sum_{i=j+1}^{k}\gamma\left(i\right)\le\int_{j+1}^{k+1}\gamma\left(x\right)dx$
		for any non-decreasing $\gamma$.} that
	\begin{align*}
	\sum_{j=2}^{k-1}\bar{a}_{j}\prod_{i=j+1}^{k}\bar{b}_{i} & \le C\sum_{j=2}^{k-1}\frac{1}{j^{\sigma}}\exp\left(-\frac{\delta_{0}}{\eta+\delta_{1}}\sum_{i=j+1}^{k}\frac{1}{i+1}\right)\\
	& \le C\sum_{j=2}^{k-1}\frac{1}{j^{\sigma}}\exp\left(\frac{\delta_{0}}{\eta+\delta_{1}}\log\frac{j+2}{k+1}\right) \le C\frac{1}{\left(k+1\right)^{\frac{\delta_{0}}{\eta+\delta_{1}}}}\sum_{j=2}^{k-1}\frac{\left(j+2\right)^{\frac{\delta_{0}}{\eta+\delta_{1}}}}{j^{\sigma}}.
	\end{align*}
	
	We now employ the elementary inequality $\left(j+2\right)^{\sigma}\le2^{\sigma-1}\left(j^{\sigma}+2^{\sigma}\right)\le Cj^{\sigma}$
	for $j\ge2$ to arrive at
	\begin{align*}
	\sum_{j=2}^{k-1}\bar{a}_{j}\prod_{i=j+1}^{k}\bar{b}_{i} & \le\frac{C}{\left(k+1\right)^{\frac{\delta_{0}}{\eta+\delta_{1}}}}\sum_{j=2}^{k-1}\left(j+2\right)^{\frac{\delta_{0}}{\eta+\delta_{1}}-\sigma}\le\frac{C}{\left(k+1\right)^{\frac{\delta_{0}}{\eta+\delta_{1}}}}\int_{2}^{k}\left(x+2\right)^{\frac{\delta_{0}}{\eta+\delta_{1}}-\sigma}dx\\
	& \le\frac{C\left(k+2\right)^{\frac{\delta_{0}}{\eta+\delta_{1}}-\sigma+1}}{\left(k+1\right)^{\frac{\delta_{0}}{\eta+\delta_{1}}}}\le C\left(k+2\right)^{-\sigma+1},
	\end{align*}
	by virtue of $\sigma>\frac{\delta_{0}}{\eta+\delta_{1}}$ and $1<\frac{k+2}{k+1}<2$.
	
	Hence, this way we complete the proof of the theorem.
\end{proof}

Since the homogenized system is linearized and associated with constant coefficients, one can obtain the error estimate between $u_{\varepsilon}^{k}$ and $u_{0}^{k}$ in $H^1(\Omega^{\varepsilon})$ by adapting the classical two-scale asymptotic expansion
\[u_{\varepsilon}^{k}(x) = u_{0}^{k}(x)+\varepsilon u_{1}^{k}\left(x,\frac{x}{\varepsilon}\right) +\varepsilon^2u_{2}^{k}\left(x,\frac{x}{\varepsilon}\right) + \ldots\]

\begin{theorem}\label{thm:4.1}
Assume $\left(\text{A}_{1}\right)$ holds. Suppose that $\partial\Omega\in C^6$ and $f,\mathcal{R}(0)\in H^4(\Omega)$. Then there exists $C>0$ independent of $\varepsilon$ such that
	\begin{align}\label{correct}
	\left\Vert u_{\varepsilon}^{k}-u_{0}^{k}\right\Vert_{H^{1}\left(\Omega^{\varepsilon}\right)} \le C\left(\left\Vert u_{0}^{k-1}\right\Vert _{H^{4}\left(\Omega\right)}\right)\varepsilon^{1/2}. 
	\end{align}
\end{theorem}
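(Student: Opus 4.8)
The plan is to observe that, for each fixed iteration index $k$, both $u_\varepsilon^k$ and $u_0^k$ solve purely \emph{linear} elliptic problems: the microscopic one carries the oscillating coefficient $\mathbf{A}(\cdot/\varepsilon)$, while the macroscopic one \eqref{eq:4.2} carries the constant homogenized matrix $A^0$ from \eqref{eq:homogenized}. Consequently the whole claim reduces to a single, step-indexed application of the classical corrector estimate for linear elliptic equations posed in perforated domains, in the spirit of \cite[Corollary 2.29]{Cioranescu1999}. I would freeze $k$, regard $\tilde f_k := |Y_l| f + (\eta+\delta_1)|Y_l| u_0^{k-1} - |Y_l|\,\mathcal{R}_{\gamma_k}(u_0^{k-1})$ (for $\alpha=0$; the case $\alpha>0$ omits the reaction term and uses the coefficient $\eta$ in place of $\eta+\delta_1$) as prescribed data, and compare $u_\varepsilon^k$ against the homogenized solution $u_0^k$ of \eqref{eq:4.2} with that source.

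First I would fix the regularity of $u_0^k$, which is exactly where $\partial\Omega\in C^6$ and $f,\mathcal{R}(0)\in H^4(\Omega)$ are consumed. Since $\mathbf{A}$ is $Y$-periodic and globally Lipschitz and $\mathcal{R}_{\gamma_k}$ is smooth, I run a bootstrap on the constant-coefficient equation \eqref{eq:4.2}: if $u_0^{k-1}\in H^4(\Omega)$ then $\mathcal{R}_{\gamma_k}(u_0^{k-1})\in H^4(\Omega)$ because $H^4(\Omega)$ is a Banach algebra embedding into $C^2(\overline\Omega)$ for $d\le 3$, so $\tilde f_k\in H^4(\Omega)$ and elliptic regularity on the $C^6$ domain gives $u_0^k\in H^6(\Omega)$ with $\|u_0^k\|_{H^6(\Omega)}\le C(\|f\|_{H^4},\|u_0^{k-1}\|_{H^4(\Omega)})$. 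This surplus macroscopic regularity is what must offset the \emph{deficient} smoothness of the cell correctors---recall that $\chi^k\in H^{1+s}(Y_l)$ only, with $s\in(-1/2,1/2)$, owing to the non-convex perforation---so that the corrector-estimate prefactor can ultimately be absorbed into $\|u_0^{k-1}\|_{H^4(\Omega)}$, matching the right-hand side of \eqref{correct}.

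The mechanism delivering the rate is the energy method applied to the two-scale residual. Using the expansion $u_0^k(x)+\varepsilon u_1^k(x,x/\varepsilon)$ with $u_1^k=\sum_{i=1}^d \chi_i^k\,\partial_{x_i}u_0^k$ and the cell problem \eqref{eq:cell}, the $\varepsilon^{-1}$ and $\varepsilon^0$ contributions cancel when this ansatz is inserted into the microscopic formulation \eqref{prob: P^k_{varepsilon}}, leaving a residual that is small in the interior but fails the exterior Dirichlet condition on $\Gamma^{\text{ext}}$. Subtracting a boundary-layer corrector supported in an $O(\varepsilon)$-collar of $\partial\Omega$ and then testing the error equation with the error itself---using the uniform coercivity from $(\text{A}_1)$ and the uniform Poincar\'e inequality of Lemma \ref{lem:poincare}---controls $\|u_\varepsilon^k-u_0^k\|_{H^1(\Omega^\varepsilon)}$; the $O(\varepsilon)$ width of the collar against the $O(\varepsilon^{-1/2})$ gradient concentration there is what pins the exponent at $\tfrac12$. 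Throughout I would carry estimates between $\Omega^\varepsilon$ and $\Omega$ via a uniform Sobolev extension operator $P^\varepsilon:H^1(\Omega^\varepsilon)\to H^1(\Omega)$, so that the smooth macroscopic objects built on $\Omega$ may be evaluated on the perforated domain.

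The principal obstacle is precisely this boundary layer married to the low corrector regularity: near $\Gamma^{\text{ext}}$ the flux $A^0\nabla u_0^k$ is not reconciled with the microscopic flux, and since $\chi^k$ lies only in $H^{1+s}(Y_l)$ the crude residual is not square-integrable at the order one needs. Resolving it calls for a carefully cut-off boundary function (or a duality/Tartar-type argument) that trades one power of $\varepsilon$ against the $O(\varepsilon^{-1/2})$ concentration, and this is exactly where the full strength $u_0^k\in H^4$---hence $\partial\Omega\in C^6$ together with $H^4$ data---is spent. A secondary bookkeeping point is that the genuine microscopic source in \eqref{prob: P^k_{varepsilon}} involves $u_\varepsilon^{k-1}$ rather than $u_0^{k-1}$; I would absorb the discrepancy through an auxiliary energy estimate bounding $u_\varepsilon^k$ minus its frozen-source counterpart by $\|u_\varepsilon^{k-1}-u_0^{k-1}\|_{L^2(\Omega^\varepsilon)}$, which is itself $O(\varepsilon^{1/2})$ by the same corrector estimate one step earlier, so the recursion closes consistently with \eqref{correct}.
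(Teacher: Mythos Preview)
Your proposal is correct and follows essentially the same route as the paper: bootstrap $u_0^k\in H^6(\Omega)$ from $\partial\Omega\in C^6$ and $f,\mathcal{R}(0)\in H^4(\Omega)$ via elliptic regularity, then invoke the classical Cioranescu--Donato corrector estimate for the resulting linear problem, with the low cell-corrector regularity $\chi\in H^{1+s}(Y_l)$ compensated by the surplus macroscopic smoothness. Your flagging of the source discrepancy ($u_\varepsilon^{k-1}$ versus $u_0^{k-1}$ on the right-hand sides) and its recursive resolution is a careful bookkeeping point that the paper does not spell out---it simply defers to \cite[Corollary 2.30]{Cioranescu1999}---so in this respect your outline is in fact more thorough than the paper's own argument.
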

\begin{proof}
The proof was already obtained via \cite[Corollary 2.30]{Cioranescu1999}. Here, we single out how that proof works in our particular case. Starting from $k=1$, our solution $u_{0}^{1}$ belongs to $H^6(\Omega)$ using the standard higher boundary regularity result for elliptic equations (cf. \cite[Section 6.3]{Evans2010}). Since $\mathcal{R}\left(0\right)\in H^{4}\left(\Omega\right)$, we get $\mathcal{R}_{\gamma_{k}}\left(u_{0}^{k-1}\right) \in H^4(\Omega)$ in the sense that
	\begin{align*}
	\left\Vert \mathcal{R}_{\gamma_{k}}\left(u_{0}^{k-1}\right)\right\Vert _{H^{4}\left(\Omega\right)} & \le\delta_{1}\left\Vert u_{0}^{k-1}\right\Vert _{H^{4}\left(\Omega\right)}+\left\Vert \mathcal{R}_{\gamma_{k}}\left(0\right)\right\Vert _{H^{4}\left(\Omega\right)}\\
	& \le\delta_{1}\left\Vert u_{0}^{k-1}\right\Vert _{H^{6}\left(\Omega\right)}+\left\Vert \mathcal{R}\left(0\right)\right\Vert _{H^{4}\left(\Omega\right)}+C.
	\end{align*}
	Therefore, we conclude that any $u_{0}^{k}$ belongs to $H^6(\Omega)$. The embedding $H^6(\Omega) \subset C^4(\overline{\Omega})$ guarantees the fact that the derivatives of $u_{0}^{k}$ up to the fourth order are bounded. Moreover, recall that cf. \cite{Savare1998}, the cell functions $\chi^{k}$ are in $H^1(Y_l)$. Hence, we can adapt the proof of \cite[Corollary 2.30]{Cioranescu1999} to prove that \eqref{correct} holds true.
\end{proof}

Ultimately, we deduce the following error estimate between $u_{\varepsilon}$ and $u_{0}^{k}$.

\begin{corollary}\label{cor:error2}
Under the assumptions of Theorem \ref{thm:3.2} and Theorem \ref{thm:4.1}, the following rate of convergence holds
	\begin{align}
	\left\Vert u_{\varepsilon}-u_{0}^{k}\right\Vert _{H^{1}\left(\Omega^{\varepsilon}\right)}\le C\left(\overline{\omega}^{k} + C_{k}\varepsilon^{1/2}\right),
	\end{align}
	where $C_{k}=C\left(\left\Vert u_{0}^{k-1}\right\Vert _{H^{4}\left(\Omega\right)}\right)>0$.
\end{corollary}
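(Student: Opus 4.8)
The plan is to derive the estimate from a single triangle inequality that decouples the two error sources already quantified in the paper: the linearization error at the micro-scale and the homogenization error of the linearized problems. Viewing $u_{0}^{k}$ in the restricted sense on $\Omega^{\varepsilon}\subset\Omega$ (this only decreases $H^{1}$ norms, and is in any case the setting in which Theorem~\ref{thm:4.1} is phrased), I would write
\[
\left\Vert u_{\varepsilon}-u_{0}^{k}\right\Vert_{H^{1}(\Omega^{\varepsilon})}\le\left\Vert u_{\varepsilon}-u_{\varepsilon}^{k}\right\Vert_{H^{1}(\Omega^{\varepsilon})}+\left\Vert u_{\varepsilon}^{k}-u_{0}^{k}\right\Vert_{H^{1}(\Omega^{\varepsilon})},
\]
and then bound each summand by the corresponding previously established result.

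For the first summand, Corollary~\ref{cor:error1} provides
\[
\sqrt{\frac{\underline{\gamma}}{\eta+\delta_{1}+\underline{\gamma}C_{p}^{-1}}}\left\Vert\nabla\left(u_{\varepsilon}-u_{\varepsilon}^{k}\right)\right\Vert_{L^{2}(\Omega^{\varepsilon})}+\left\Vert u_{\varepsilon}-u_{\varepsilon}^{k}\right\Vert_{L^{2}(\Omega^{\varepsilon})}\le\frac{C\overline{\omega}^{k}}{1-\overline{\omega}},
\]
and since the prefactor $\sqrt{\underline{\gamma}/(\eta+\delta_{1}+\underline{\gamma}C_{p}^{-1})}$ as well as $1/(1-\overline{\omega})$ are strictly positive constants independent of $\varepsilon$ (recall $\overline{\omega}\in(0,1)$ by \eqref{dk1}), absorbing them into $C$ yields $\left\Vert u_{\varepsilon}-u_{\varepsilon}^{k}\right\Vert_{H^{1}(\Omega^{\varepsilon})}\le C\overline{\omega}^{k}$. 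For the second summand, Theorem~\ref{thm:4.1} applies directly: its hypotheses $\partial\Omega\in C^{6}$ and $f,\mathcal{R}(0)\in H^{4}(\Omega)$, together with $\left(\text{A}_{1}\right)$, are exactly the extra assumptions imposed in the present corollary on top of those of Theorem~\ref{thm:3.2}, so $\left\Vert u_{\varepsilon}^{k}-u_{0}^{k}\right\Vert_{H^{1}(\Omega^{\varepsilon})}\le C\!\left(\left\Vert u_{0}^{k-1}\right\Vert_{H^{4}(\Omega)}\right)\varepsilon^{1/2}=:C_{k}\varepsilon^{1/2}$.

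Summing the two bounds gives $\left\Vert u_{\varepsilon}-u_{0}^{k}\right\Vert_{H^{1}(\Omega^{\varepsilon})}\le C\overline{\omega}^{k}+C_{k}\varepsilon^{1/2}\le C\!\left(\overline{\omega}^{k}+C_{k}\varepsilon^{1/2}\right)$, which is the assertion. There is no substantial obstacle here: the proof is a two-term triangle inequality layered over Corollary~\ref{cor:error1} and Theorem~\ref{thm:4.1}. The only matters requiring attention are bookkeeping ones — interpreting $u_{0}^{k}$ as its restriction to $\Omega^{\varepsilon}$ so the splitting is legitimate, verifying that every multiplicative constant inherited from the cited results is $\varepsilon$-independent, and observing that all the $k$-dependence is carried by $C_{k}$, which need not be uniformly bounded in $k$ for the stated estimate. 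One may further remark, in line with the discussion around \eqref{d1}--\eqref{d2}, that this $k$-dependent $C_{k}$ is exactly the cost of proceeding through diagram \eqref{d1}; had the well-posedness of $\left(P_{\varepsilon}\right)$ and $\left(P_{0}\right)$ been assumed a priori (diagram \eqref{d2}), a $k$-independent rate in the spirit of \cite{Slodika2001} would be available instead.
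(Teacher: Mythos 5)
Your proposal is correct and coincides with the paper's (implicit) argument: the corollary is obtained exactly by the triangle inequality splitting $u_{\varepsilon}-u_{0}^{k}$ into $u_{\varepsilon}-u_{\varepsilon}^{k}$ and $u_{\varepsilon}^{k}-u_{0}^{k}$, bounding the first term via Corollary~\ref{cor:error1} and the second via Theorem~\ref{thm:4.1}. Your bookkeeping remarks on the $\varepsilon$-independence of the absorbed constants and on the $k$-dependence being carried entirely by $C_{k}$ are consistent with the paper's discussion following the corollary.
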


In Corollary \ref{cor:error2}, we see that due to the presence of $C_{k}$ it is not necessary to take $k$ very large to get a fine approximation of $u_{\varepsilon}$. Theoretically, this $C_k$ largeness might slow down the smallness of the quantity $\varepsilon^{1/2}$. From the computational standpoint, usually one should stop at $k=5$ at most because the algorithm is robust and it avoids being time-consuming.

\section{Numerical implementation}\label{Sec:numerical}

In this section, we numerically investigate the potential of the developed iterative method in approximating nonlinear elliptic problems, which are described on complex porous domains. Here, we focus on the case $\alpha = 0$ since this case not only remains nonlinear in the macroscopic problem, but also can be considered as a paradigm for treating other problems of interest. Besides, the case, $\alpha >0$, is simple and can be handled in a more straightforward manner since the reaction term converges to $0$ as $\varepsilon$ tends $0$.

 For the purpose of illustration, let us consider the problem  \eqref{Eq(P)} in a two-dimensional unit square $\Omega =(0,1)\times (0,1)$. The highly oscillatory diffusion coefficient is chosen as
\begin{align*}
\mathbf{A}(x/\varepsilon) = \dfrac{1}{2+\cos\left(\frac{2\pi x}{\varepsilon} \right)\cos(\frac{2\pi y}{\varepsilon})},
\end{align*}
and we take $\mathcal{R}(u)$  as in \eqref{eq:3.7} with $p = 2$, and the source term is $f = 1$.

In order to demonstrate the $L^2$ estimate of the error between $u_{\varepsilon}$ and $u_{\varepsilon}^k$, we first solve the  problem \eqref{eq:4.2} for $u_{\varepsilon}^k$, which involve solving \eqref{eq:cell} and \eqref{eq:homogenized} for the cell functions $ \chi_i, i=1,2,$ and for the effective diffusion coefficient, $A^0$, respectively. We consider \eqref{eq:cell} in the unit cell $Y =(0,1)\times (0,1)$ with a hole of radius $r = 0.4$ and  porosity, $|Y_l | = 1- \pi r^2$. Moreover, we take the constants $\eta = 0.4$ and $\delta_1 = 1$ for the stabilization constant $M$, cf. Theorem \ref{thm:3.1-1}. Note that the regularization $\mathcal{R}$ is given, according to  \eqref{eq:3.7}, by
\begin{align}\label{eq:regular}
\mathcal{R}_{\gamma_{k}}\left(u\right)=\max\left\{ u^{2},\frac{\delta_{0}}{\gamma_{k}}u\right\}, \quad \gamma_{k} = \frac{1}{2^{k+2}}.
\end{align}
Eventually, by plugging the cell solutions $\chi_i, i=1,2$ in \eqref{eq:homogenized}, we can compute the homogenized diffusion coefficient, as follows:
\begin{align}
A^0 = \begin{pmatrix} 0.192688 &  1.89291\times10^{-8}  \\
 1.89291\times10^{-8} & 0.192688 \end{pmatrix}.
\end{align}

\begin{table}[H]
    \centering
    \caption{The errors: $E_1$ between $u_{\varepsilon}$ and $u_0^k$, and between $ \nabla u_{\varepsilon}$ and $\nabla  u_0^k$; the relative error $E2$ between $u_{\varepsilon}$ and $u_0^k$   (with $k=2)$}
      \begin{tabular}{  +c +c +c +c +c +c  +c +c}
  \hline
    $\varepsilon$&0.5& 0.25&$ 0.166$ & 0.1 & 0.083 & 0.05&0.025 \\ \hline   
  $E_{1}\left(u_{\varepsilon,h},u_{0,h}^{k}\right)$ 	&0.0113	&0.0040 &	0.0026 &	0.0018	&	0.0017	&	0.0016	&	0.00158 \\\hline

$E_{1}\left(\nabla u_{\varepsilon,h},\nabla u_{0,h}^{k}\right)$ &0.2167 & 	0.1857& 	0.1824	&0.1761		&0.1755		&0.1746		& 0.17425	 \\\hline
 $E_{2}\left(u_{\varepsilon,h},u_{0,h}^{k}\right)$ &0.1854& 	0.0562& 0.0357	 &	0.0242	&	0.0230		& 0.0217	&	0.02128	 \\\hline
   $\max_{h}$ &  0.04 & 0.02&  0.014& 0.008 & 0.007 &  0.006& 0.0058 \\\hline
\end{tabular}
\label{table:E1}
\end{table}

The original problem \eqref{Eq(P)} for $u_{\varepsilon}$ is solved using the Newton--Raphson method and the P1 standard finite elements on a non-uniform mesh discretization with a size $h$, satisfying $h<\varepsilon$. Since the mesh is non-uniform, we denote by $\max_{h}$ the largest value of the mesh size to qualify the condition $h<\varepsilon$. As $\varepsilon$ decreases from 0.5, $\max_{h}$ also decreases and remains consistent with the requirement, $h<\varepsilon$, for multiscale simulation of homogenization problems as shown in Table \ref{table:E1}. The iterative scheme for problems \eqref{prob: P^k_{varepsilon}} and \eqref{eq:4.2} are solved until the difference between the $L^2$ estimates of successive iterations is close to zero.
 
To assess the efficiency of the linearization of \eqref{Eq(P)} in \eqref{prob: P^k_{varepsilon}}, we take into account the following error estimates:
\[
E_{1}\left(U,V\right)=\left\Vert U-V\right\Vert _{L^{2}(\Omega^{\varepsilon})},\quad E_{2}\left(U,V\right)=\dfrac{\left\Vert U-V\right\Vert _{L^{2}(\Omega^{\varepsilon})}}{\|U\|_{L^{2}(\Omega^{\varepsilon})}}.
\]

The relative error between $u_{\varepsilon,h}^k$ and $u_{0,k}^h$ are tabulated in Table \ref{Table:E2}. It indicates that the discrepancy between the solutions of the linearized microscopic problem and the linearized macroscopic problem is good within $k=4$. However, the convergence rate of the Newton's iteration for $u_{\varepsilon,h}$ is fast when compared with the convergence rate of the linearization algorithm for $u^k_{\varepsilon,h}$. One key result, as demonstrated in Tables \ref{table:E1} and \ref{Table:E2}, is that the proposed linearization algorithm conveniently promotes the passage from the nonlinear microscopic description to the  corresponding macroscopic description, without the need of performing Taylor's expansion for nonlinear terms as is customary in the classical homogenization theory. We also remark that the Newton's iteration usually needs a fine initial guess to attain convergence, while the choice is arbitrary for our linearization scheme.

In addition, in Table \ref{table:E1}, the norm of the difference between  $ \nabla u_{\varepsilon,h} $ and $\nabla u^k_{0,h}$ in $L^2$ are listed. When $\varepsilon$ is decreasing, the error becomes smaller with $k$ up to 2. It confirms that the performance of our technique on the problem is good enough for such kind of classical homogenization problems. However, a stronger $H^1$ error estimate could be observed by considering terms of correction in the estimate, which is beyond the scope of the present manuscript.

As to our numerical results, by fixing $\varepsilon =0.1$ the relative $L^2$ error between $u_{\varepsilon,h}$ and $u_{\varepsilon,h}^k$ with $k=2$ gives $1.743 \times 10^{-2}$ and we have illustrated these solutions in Figure \ref{fig:2}.  Figure \ref{fig:3} depicts our linearized macroscopic solution at $k=4$ and the microscopic solution when $\varepsilon$ varies from $0.25$ to $0.025$. We can conclude from Figures \ref{fig:2} and \ref{fig:3} that our linearization scheme performs very well in both microscopic and macroscopic contexts. Moreover, the decrease in the error when $\varepsilon$ becomes smaller, as demonstrated in Tables \ref{table:E1} and \ref{Table:E2}, shows the consistency of our method with the classical homogenization theory, i.e. the convergence of $u_{\varepsilon}$ to $u_0$, for some $k$ in the iterative scheme.

\begin{table}[H]
    \centering
    \caption{The relative errors $E_2$ between $u_{\varepsilon,h}^k$ and $u_{0,h^k}$, between $u_{\varepsilon,h}$ and $u_{\varepsilon,h}^k$, and between $\nabla u_{\varepsilon,h}$ and $\nabla u_{\varepsilon,h}^{k}$ for the fixed value of $\varepsilon =0.25$ as $k$ increases up to 4}
      \begin{tabular}{  +c +c +c +c +c }
  \hline
    $k$ &1		& 	2	&3 & 4 \\ \hline   
  $E_{2}\left(u_{\varepsilon,h}^{k},u_{0,h}^{k}\right)$ 	& 0.06591 &		0.06441 &	0.06413 &		0.06408 \\\hline
 $E_{2}\left(u_{\varepsilon,h},u_{\varepsilon,h}^{k}\right)$ & 0.13964	& 0.01764  &	0.00219 	& 0.00027 \\\hline
 $E_{2}\left(\nabla u_{\varepsilon,h},\nabla u_{\varepsilon,h}^{k}\right)$ & 0.13623 &	0.01720 &	0.00214 &	0.00027	 \\\hline
\end{tabular}
\label{Table:E2}
\end{table}

%

  \begin{figure}[H]
\centering
\begin{subfigure}[b]{0.47\linewidth}
    \includegraphics[width=\linewidth]{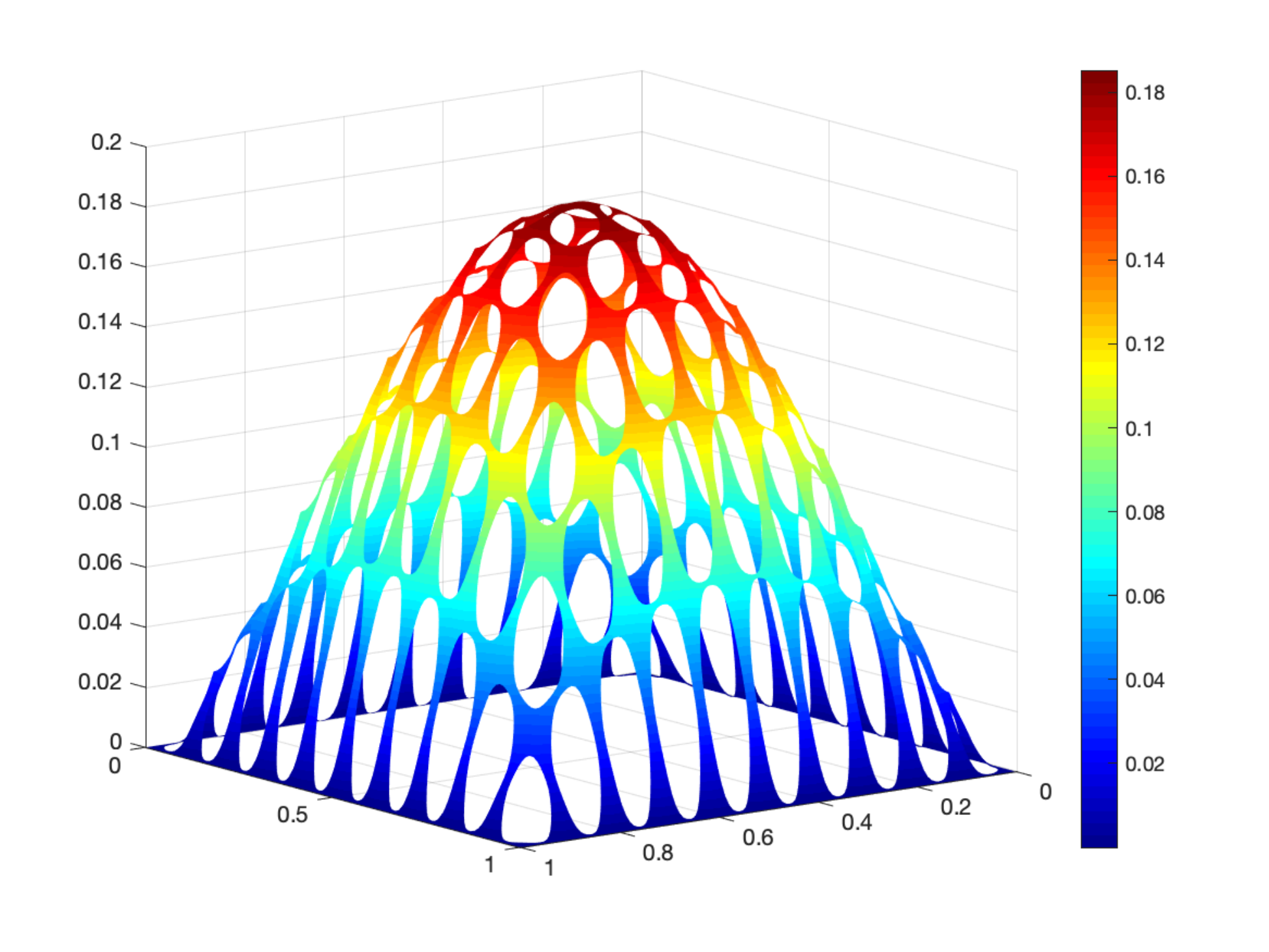}
     \caption{$u_{\varepsilon,h}$}
  \end{subfigure}
  \begin{subfigure}[b]{0.47\linewidth}
    \includegraphics[width=\linewidth]{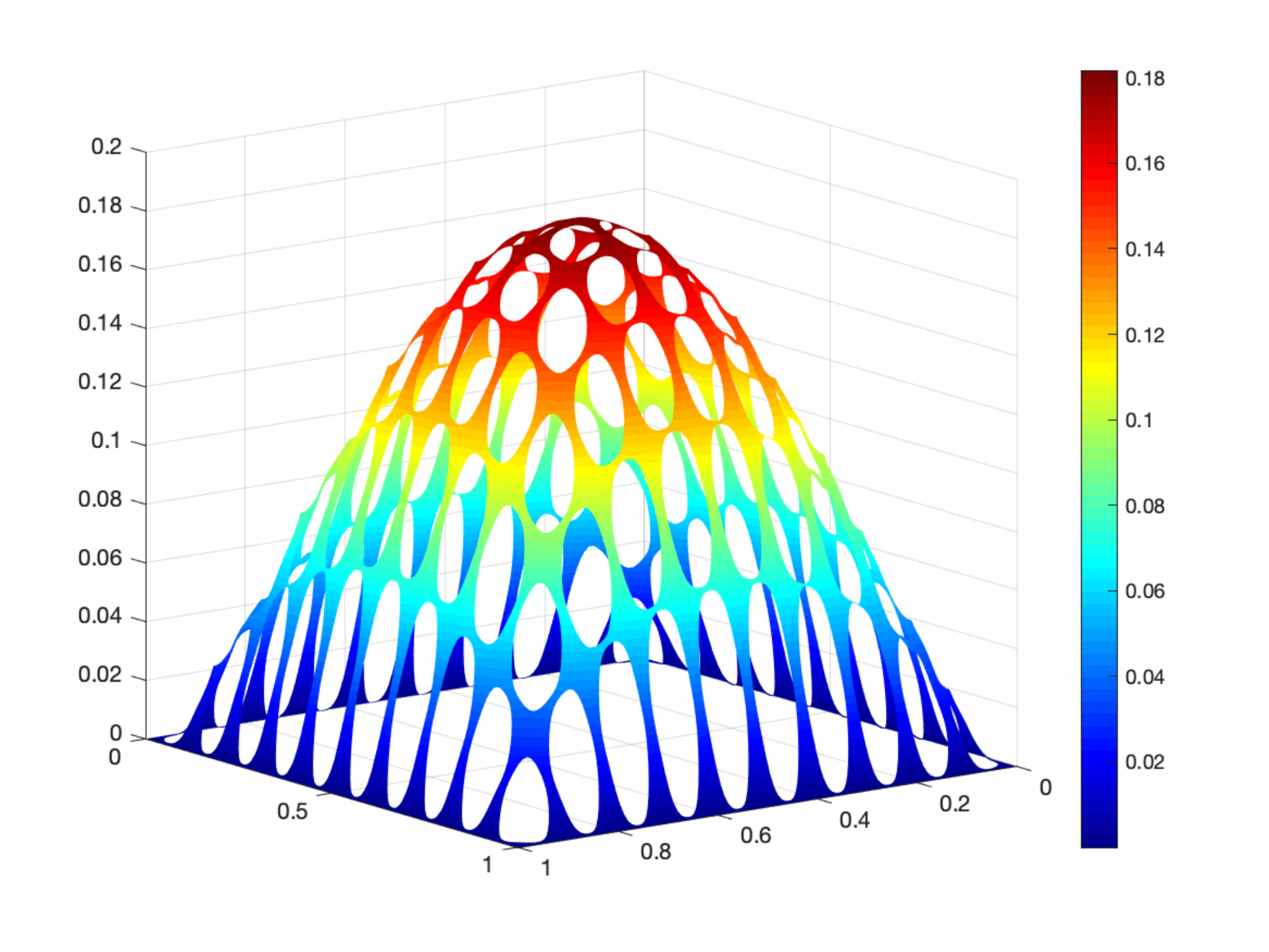}
    \caption{$u_{\varepsilon,h}^k$}
  \end{subfigure}
  \caption{Comparison of solutions between $u_{\varepsilon,h}$ and $u_{\varepsilon,h}^k$ with $\varepsilon = 0.1$ and $k=2$.}
  \label{fig:2}
\end{figure}

\begin{figure}[h]
  \centering
  \begin{subfigure}[b]{0.47\linewidth}
    \includegraphics[width=\linewidth]{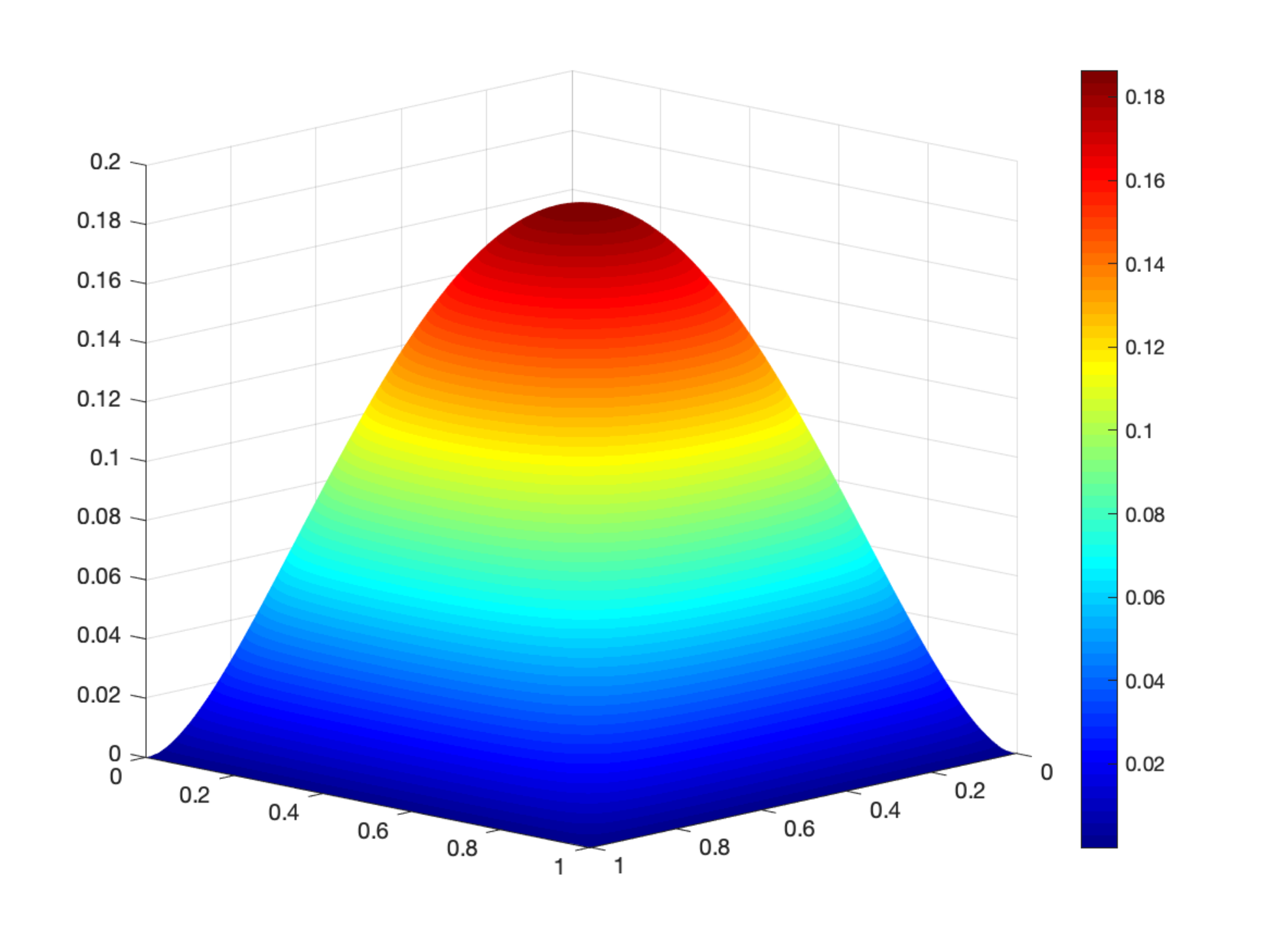}
     \caption{$u_{0,h}^k$}
  \end{subfigure}
  \begin{subfigure}[b]{0.47\linewidth}
    \includegraphics[width=\linewidth]{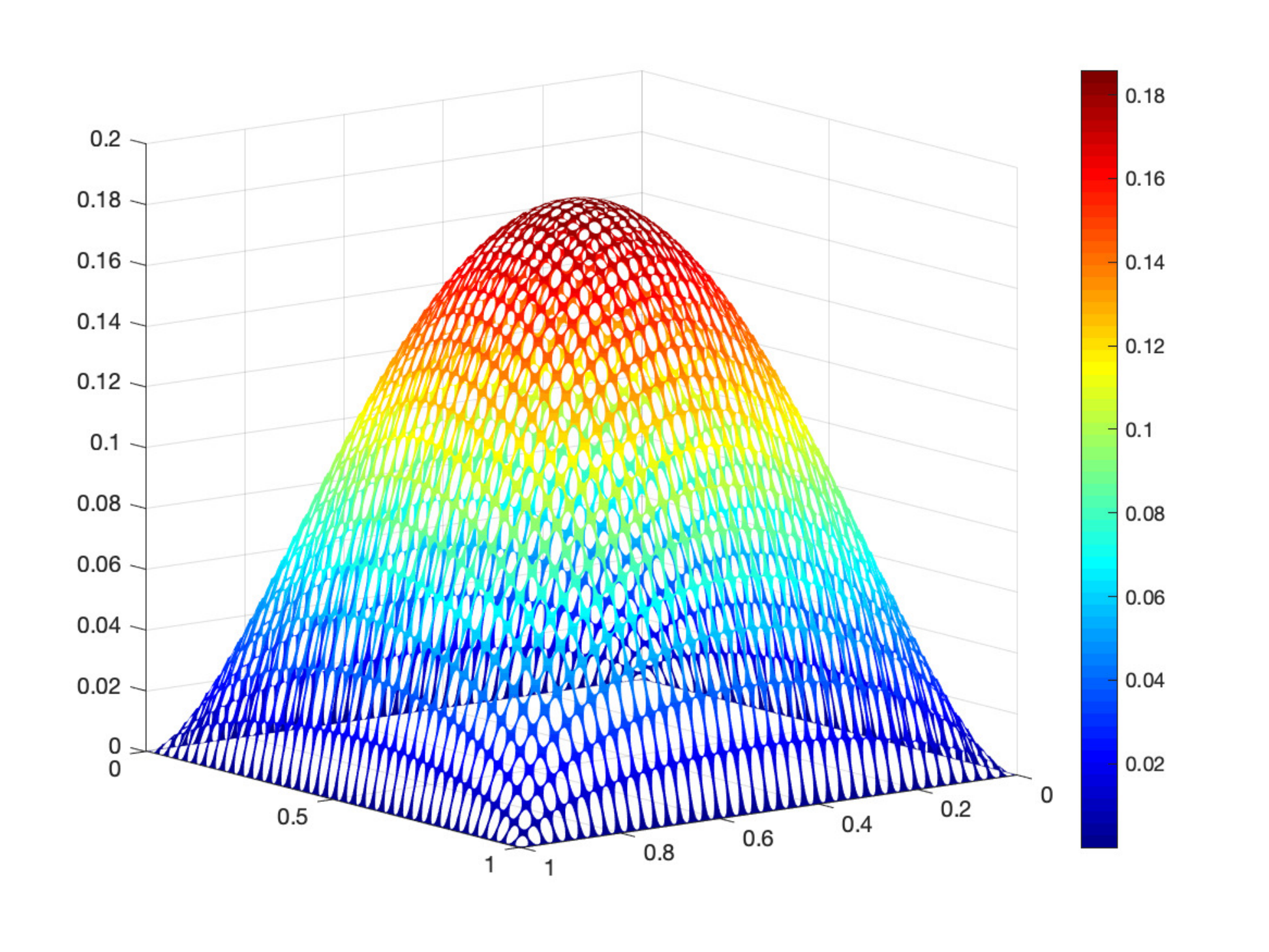}
    \caption{$\varepsilon = 0.25$}
  \end{subfigure}
  \begin{subfigure}[b]{0.47\linewidth}
    \includegraphics[width=\linewidth]{ueps01_3d}
    \caption{$\varepsilon = 0.1$}
  \end{subfigure}
  \begin{subfigure}[b]{0.47\linewidth}
    \includegraphics[width=\linewidth]{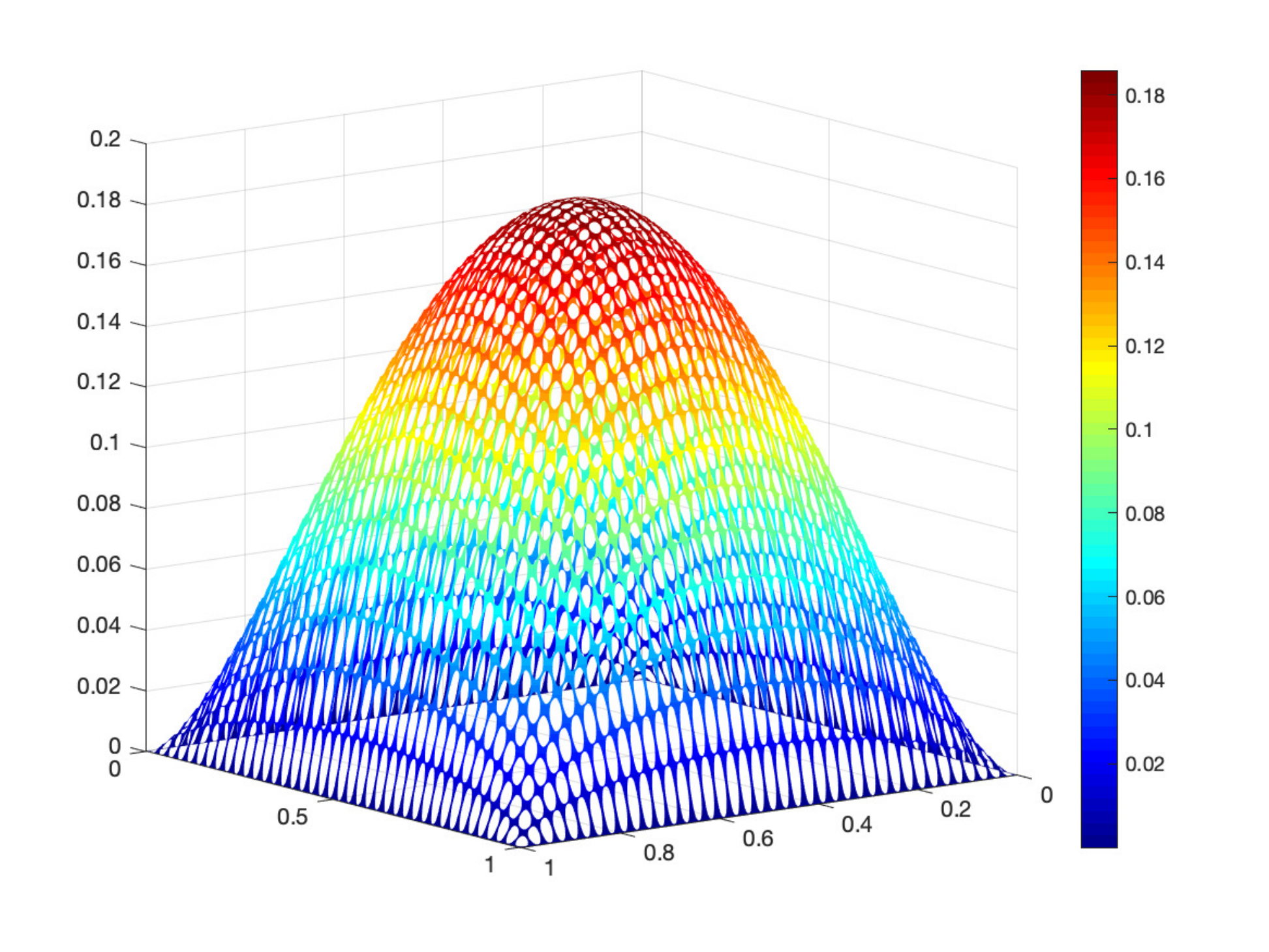}
    \caption{$\varepsilon = 0.025$}
  \end{subfigure}
  \caption{Spatial distribution of solution profiles of the macroscopic and microscopic problems. (a) the macroscopic solution $u_{0,h}^k$ with $k=2$, (b)-(d) evolution of the microscopic solution $u_{\varepsilon,h}$ as a function of $\varepsilon$.}
  \label{fig:3}
\end{figure}

\section{Concluding remarks}\label{Sec:conclusion}
In this work, we have proposed a regularization- and linearization-based scheme to construct efficient approximations of both microscopic and macroscopic problems. Although, for example, in power-law nonlinearity, a geometric regularization parameter is needed to prove the well-posedness of microscopic problem, in practice one can utilize the harmonic progression at the macro-scale to get convergence from the scheme without paying attention to the rate. Note that the harmonic choice of the regularization parameter is rather well-suited to polynomial approximations as deduced in  \eqref{hehe}. We emphasize that the arguments in this paper can be typically applied to single out a reliable approximation of the macroscopic equation if the weak solvability of the microscopic scenario is mathematically known. This approach could be helpful for engineering needs, among several types of linearization methods. Furthermore, in upcoming works we will attempt to adopt the so-called boundary layers correctors (see \cite{Versieux2006}) to our context to improve the error estimates in the numerical perspective.

\appendix
\section{Auxiliary proofs}\label{app:1}

\subsection*{Proof of Lemma \ref{lem:3.1}}
The proof is essentially done by induction. Indeed, it trivially holds for $ k=3 $. For any $ k=n $, one now suppose that 
\begin{align}\label{k=n case}
p_{n}+q_{n}\le a_{n}+\sum_{j=2}^{n-1}a_{j}\prod_{i=j+1}^{n}b_{i}+q_{1}\prod_{i=2}^{n}b_{i}.
\end{align}

Our aim is to prove that it still holds true for the case $k=n+1$, i.e.
\begin{align*}
p_{n+1} + q_{n+1}\le a_{n+1}+\sum_{j=2}^{n}a_{j}\prod_{i=j+1}^{n+1}b_{i}+q_{1}\prod_{i=2}^{n+1}b_{i}.
\end{align*}

Using \eqref{k=n case}, we derive that
\begin{align*}
p_{n+1}+q_{n+1}
&\le  a_{n+1} + b_{n+1}q_{n} \\
&\le a_{n+1} +  b_{n+1} \left( a_{n}+\sum_{j=2}^{n-1}a_{j}\prod_{i=j+1}^{n}b_{i}+q_{1}\prod_{i=2}^{n}b_{i}
\right) \le a_{n+1} + \sum_{j=2}^{n}a_{j}\prod_{i=j+1}^{n+1}b_{i}+q_{1}\prod_{i=2}^{n+1}b_{i},
\end{align*}
which completes the proof of the lemma.

\section*{Acknowledgements}
V.A.K. thanks Prof. Adrian Muntean for being his supervisor since March, 2015 and for giving him invaluable advice. V.A.K thanks Prof. Iuliu Sorin Pop (Hasselt, Belgium) for recent supports in his research career and acknowledges the hospitality of the Hasselt University during the time he is hosted as a postdoctoral fellow. N.N.N. acknowledges the support of the project INdAM Doctoral Programme in Mathematics and/or Applications Cofunded by Marie Sklodowska-Curie Actions, acronym: INdAM-DP-COFUND-2015, grant
number: 713485.


\bibliographystyle{spbasic} 
\bibliography{mybibfile}

\end{document}